\theoremstyle{plain}
\newtheorem{theorem}{Theorem}
\newtheorem{lemma}{Lemma}
\newtheorem{proposition}{Proposition}
\newtheorem{corollary}{Corollary}
\theoremstyle{definition}
\newtheorem{definition}{Definition}
\numberwithin{equation}{section}
\newtheorem{remark}{Remark}
\journal{}
\begin{document}

\begin{frontmatter}
\title{\huge Characterizing idempotent nullnorms on bounded lattices\tnoteref{mytitlenote}}
\tnotetext[mytitlenote]{This work was supported by the National Natural Science Foundation of China
(No. 11601449), the Science and Technology Innovation Team of Education Department of
Sichuan for Dynamical System and its Applications (No. 18TD0013),
the Key Natural Science Foundation of Universities in Guangdong Province (No. 2019KZDXM027),
the Youth Science and Technology Innovation Team of Southwest Petroleum University for Nonlinear
Systems (No. 2017CXTD02), and the National Nature Science Foundation of China (Key Program)
(No. 51534006).}


\author[a1,a2,a3]{Xinxing Wu}
\address[a1]{School of Sciences, Southwest Petroleum University, Chengdu, Sichuan 610500, China}
\address[a2]{Institute for Artificial Intelligence, Southwest Petroleum University, Chengdu, Sichuan 610500, China}
\address[a3]{Zhuhai College of Jilin University, Zhuhai, Guangdong 519041, China}
\ead{wuxinxing5201314@163.com}

\author[a1]{Shudi Liang}
\ead{shudi$\_$liang@163.com}

\author[a4]{G\"{u}l Deniz \c{C}ayl{\i}\corref{mycorrespondingauthor}}
\cortext[mycorrespondingauthor]{Corresponding author}
\address[a4]{Department of Mathematics, Faculty of Sciences, Karadeniz Technical University, 61080 Trabzon, Turkey}
\ead{guldeniz.cayli@ktu.edu.tr}

%
%

\begin{abstract}
Nullnorms with a zero element being at any point of a bounded lattice are an important generalization
of triangular norms and triangular conorms. This paper obtains an equivalent characterization for the
existence of idempotent nullnorms with the zero element $a$ on any bounded lattice containing two distinct
elements incomparable with $a$. Furthermore, some basic properties for the bounded lattice containing
two distinct element incomparable with a are presented.
\end{abstract}
\begin{keyword}
Lattice; Nullnorm; Idempotent nullnorm; Zero element.
\MSC[2010] 
\end{keyword}
\end{frontmatter}


\section{Introduction}

Being the generalizations of triangular norms and triangular conorms, which
were introduced by Schweizer and Sklar \cite{SS1983}, t-operators and
nullnorms were introduced by Mas et al. \cite{MMT1999} and Calvo et al. \cite%
{CBF2001}, respectively. Then, Mas et al.~\cite{MMT2002} showed that both of
them coincide with each other on $[0,1]$. Simply speaking, an nullnorm is a
binary operation obtained from triangular norms and triangular conorms using
an ordinal sum structure with the zero element lying anywhere in $[0,1]$.
Because of its special structure, nullnorms on $[0,1]$ are useful in both
theory and applications (\cite%
{AlsFraSch06,DDR2008,D2004,D2015,KMP2000,MMT2002,MMT2002-2,QZ2005}).
Recently, it has been generalized to bounded lattices by Kara\c{c}al et al.~%
\cite{KIM2015}, and extensively investigated. In particular, Kara\c{c}al et
al.~\cite{KIM2015} proved the existence of nullnorms with the zero element $a$
for arbitrary element $a\in L\backslash \{0,1\}$ with underlying t-norms and
t-conorms on an arbitrary bounded lattice $L$. \c{C}ayl{\i } and Kara\c{c}al
\cite{CK2018} showed that there exists a unique idempotent nullnorm on an
arbitrary distributive bounded lattice and proved that an idempotent
nullnorm on a bounded lattice need not always exists. Ertu\v{g}rul~\cite{E2018}
not only obtained two new methods to construct nullnorms on an arbitrary
bounded lattice, but also introduced and investigated an equivalent relation based
on a class of nullnorms. Xie~\cite{Xie2018} extended type-1 proper nullnorms and
proper uninorms to fuzzy truth values and introduced the notions of type-2
nullnorms and uninorms. \c{C}ayl\i~\cite{Ca2020} discussed the characterization of idempotent
nullnorms on bounded lattices and researched some of their main properties.
Moreover, it was presented some construction approaches for nullnorms, in particular
idempotent nullnorms, on bounded lattices with the indicated zero element
under some additional constraints. In~\cite{Ca2020-2}, two new methods were
introduced to obtain idempotent nullnorms on bounded lattices
with a zero element $a$ under the additional assumption that all elements
incomparable with $a$ are comparable with each other. In~\cite{Ca2020-3},
four methods were proposed to construct nullnorms on bounded lattices
derived from triangular norms on $\left[a, 1\right]^{2}$ and triangular
conorms on $\left[0, a\right]^{2}$, where some sufficient and necessary
conditions on theirs zero element $a$ are required. As a consequence of
these methods, some new types idempotent nullnorms on bounded lattices were obtained.
Sun and Liu~\cite{SL2020} characterized a class of nullnorms on bounded lattices
and represented them by triangular norms and triangular conorms.

\c{C}ayl{\i} and Kara\c{c}al obtained a result (see \cite[Theorem 3]{CK2018})
stating that there exists a unique idempotent nullnorm on bounded lattice
$L$ with the zero element $a$ provided that the bounded lattice $L$ contains
only one element incomparable with $a$. This paper considers when does the
idempotent nullnorm on bounded lattice $L$ with the zero element $a$ exist,
under the case that the bounded lattice $L$ contains two distinct element
incomparable with $a$. By using this equivalent characterization, the bounded
lattices which do not admit any idempotent nullnorm on themselves with the zero
element $a$ are obtained. Meanwhile, some basic properties for the bounded
lattice containing two distinct element incomparable with $a$ are obtained.

\section{Preliminaries}

This section provides some basic concepts based on bounded lattices and some
results related to them.

A \textit{lattice} \cite{Bir1967} is a partially ordered set $(L,\leq )$
satisfying that each two elements $x,y\in L$ have a greatest lower bound,
called \textit{infimum} and denoted as $x\wedge y$, as well as a smallest
upper bound, called \textit{supremum} and denoted by $x\vee y$. A lattice is
called \textit{bounded} if it has a top element and a bottom element,
written as $1$ and $0$, respectively. Let $(L,\leq ,0,1)$ denote a bounded
lattice with top element $1$ and bottom element $0$ throughout this paper.
For $x,y\in L$, the symbol $x<y$ means that $x\leq y$ and $x\neq y$. The
elements $x$ and $y$ in $L$ are \textit{comparable} if $x\leq y$ or $y\leq x$,
in this case, we use the notation $x\nparallel y$. Otherwise, $x$ and $y$
are called \textit{incomparable} if $x\nleq y$ and $y\nleq x$, in this case,
we use the notation $x\Vert y$. The set of all elements of $L$ that are
incomparable with $a$ is denoted by $I_{a}$, i.e., $I_{a}=\{x\in L:x\Vert a\}$.

%

\begin{definition}
\cite{Bir1967} Let $(L, \leq, 0, 1)$ be a bounded lattice and $a, b\in L$
with $a\leq b$. The subinterval $[a, b]$ is defined as
\begin{equation*}
[a, b]=\{x\in L: a\leq x\leq b\}.
\end{equation*}
Other subintervals such as $[a, b)$, $(a, b]$, and $(a, b)$ can be defined
similarly. Obviously, $([a, b], \leq)$ is a bounded lattice with top element
$b$ and bottom element $a$.
\end{definition}

\begin{definition}
\cite{KIM2015} Let $(L, \leq, 0, 1)$ be a bounded lattice. A commutative,
associative, non-decreasing in each variable function $V: L^2\to L$ is
called a \textit{nullnorm} on $L$ if there exists an element $a\in L$, which
is called a \textit{zero element} for $V$, such that $V(x, 0)=x$ for all $%
x\leq a$ and $V(x, 1)=x$ for all $x\geq a$.
\end{definition}

Clearly, $V(x, a)=a$ for all $x\in L$.

\begin{definition}
{\cite{Ca2020-2}} Let $(L,\leq ,0,1)$ be a bounded lattice, $a\in
L\backslash \{0,1\}$, and $V$ be a nullnorm on $L$ with the zero element $a$%
. $V$ is called an \textit{idempotent nullnorm} on $L$ if $V(x,x)=x$ for all
$x\in L$.
\end{definition}

The following basic properties on the idempotent nullnorms are obtained by
\c{C}ayl{\i}~\cite{Ca2020}, showing that an idempotent nullnorm on $L$ with
the zero element $a$ is completely determined by its values on $I_{a}\times
I_{a}$.

\begin{lemma}{\rm \cite[Propositions 3, 6 and 7]{Ca2020}}\label{stru-lemma}
Let $(L, \leq, 0, 1)$ be a bounded lattice, $a\in L\backslash \{0, 1\}$, and $V$ be
an idempotent nullnorm on $L$ with the zero element $a$. Then, the following
statements hold:
\begin{enumerate}[{\rm (i)}]
  \item $V(x, y)=x\vee y$ for all $(x, y)\in [0, a]^{2}$;
  \item $V(x, y)=x\wedge y$ for all $(x, y)\in [a, 1]^{2}$;
  \item $V(x, y)=x\vee (y\wedge a)$ for all $(x, y)\in [0, a]\times I_a$;
  \item $V(x, y)=y\vee (x\wedge a)$ for all $(x, y)\in I_a\times [0, a]$;
  \item $V(x, y)=x\wedge (y\vee a)$ for all $(x, y)\in [a, 1]\times I_a$;
  \item $V(x, y)=y\wedge (x\vee a)$ for all $(x, y)\in I_a\times [a, 1]$;
  \item $V(x, y)=a$ for all $(x, y)\in \left([0, a]\times [a, 1]\right) \cup \left([a, 1]\times [0, a]\right)$;
  \item\label{viii} $V(x, y)=(x\wedge a)\vee (y\wedge a)$ or $V(x, y)=(x\vee a)\wedge (y\vee a)$ or
$V(x, y)\in I_a$ for all $x, y\in I_{a}$.
\end{enumerate}
\end{lemma}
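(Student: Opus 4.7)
The plan is to prove the items in the order (i)--(ii)--(vii), then (iii)--(vi), then (viii), exploiting absorption of $a$, monotonicity, idempotence, and associativity. The preliminary observation is that the nullnorm axioms $V(a,0)=V(a,1)=a$ combined with monotonicity give $V(a,x)=a$ for every $x\in L$, so $a$ is absorbing. Items (i) and (ii) follow by a standard sandwich: for $(x,y)\in[0,a]^2$, monotonicity with $V(z,0)=z$ (for $z\le a$) yields $V(x,y)\ge x\vee y$, while idempotence gives $V(x,y)\le V(x\vee y,x\vee y)=x\vee y$; item (ii) is dual, using $V(z,1)=z$ for $z\ge a$. Item (vii) is then immediate from absorption: for $(x,y)\in[0,a]\times[a,1]$, $a=V(x,a)\le V(x,y)\le V(a,y)=a$, and the other case follows by commutativity.

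The central step is (iii)--(vi), which I treat by an \emph{associativity-through-$a$} trick. I illustrate (iii); (iv) then follows by commutativity, and (v), (vi) are order-dual. Fix $(x,y)\in[0,a]\times I_a$. First I verify the auxiliary identity $V(y,y\wedge a)=y\wedge a$: monotonicity yields $V(y,y\wedge a)\le V(y,y)\wedge V(y,a)=y\wedge a$, and idempotence at $y\wedge a$ together with monotonicity supplies the reverse bound. Associativity then pushes the target into $[0,a]^2$, where (i) takes over:
\[
V(V(x,y),y\wedge a) \;=\; V(x,V(y,y\wedge a)) \;=\; V(x,y\wedge a) \;=\; x\vee(y\wedge a).
\]
Since $V(x,y)\le V(a,y)=a$, applying (i) again rewrites the left-hand side as $V(x,y)\vee(y\wedge a)$. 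Equating the two expressions forces $V(x,y)\le x\vee(y\wedge a)$, and the matching lower bound $V(x,y)\ge V(x,y\wedge a)=x\vee(y\wedge a)$ comes from monotonicity and (i). Parts (v), (vi) run the same way, with $y\vee a$ and (ii) in place of $y\wedge a$ and (i).

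For (viii), fix $x,y\in I_a$ and case-split on how $V(x,y)$ sits with respect to $a$. If $V(x,y)\le a$, the same trick with $x\wedge a$ in place of $y\wedge a$, now invoking the already-proved (iv), gives
\[
V(V(x,y),x\wedge a) \;=\; V(x,V(y,x\wedge a)) \;=\; V(x,(x\wedge a)\vee(y\wedge a)) \;=\; (x\wedge a)\vee(y\wedge a),
\]
while (i) rewrites the left-hand side as $V(x,y)\vee(x\wedge a)$; this forces $V(x,y)\le(x\wedge a)\vee(y\wedge a)$, with the reverse inequality supplied by $V(x,y)\ge V(x\wedge a,y\wedge a)=(x\wedge a)\vee(y\wedge a)$ via monotonicity and (i). The case $V(x,y)\ge a$ is order-dual, producing $V(x,y)=(x\vee a)\wedge(y\vee a)$ via (vi) and (ii). If neither relation holds, then $V(x,y)\in I_a$ by definition, which closes the argument. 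The main obstacle is (iii): it is tempting to write $V(x,y)\le V(y,y)=y$, but this silently requires $x\le y$, which generally fails when $y\in I_a$ and $x\in[0,a]$. Routing through $y\wedge a$ (respectively $y\vee a$) sidesteps this because such elements always lie in the tractable region where (i) or (ii) has already been proved.
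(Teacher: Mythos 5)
Your proof is correct. Note that the paper itself gives no proof of this lemma: it is imported verbatim from \cite[Propositions 3, 6 and 7]{Ca2020}, so there is no in-paper argument to compare against. Your derivation is a sound self-contained reconstruction: the sandwich arguments for (i), (ii), (vii) are the standard ones, and your handling of (iii)--(vi) and (viii) via the auxiliary identity $V(y,y\wedge a)=y\wedge a$ and right-multiplication by $y\wedge a$ (resp.\ $y\vee a$, $x\wedge a$, $x\vee a$) is a legitimate variant of the usual argument, which instead factors through the partial neutrality of $0$, computing $V(0,y)=y\wedge a$ directly (from $V(0,y)\le V(y,y)=y$ and $V(0,y)\le V(a,y)=a$ together with $V(0,y)\ge V(0,y\wedge a)=y\wedge a$) and then writing $V(x,y)=V(V(x,0),y)=V(x,V(0,y))=V(x,y\wedge a)$. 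Both routes use exactly the same ingredients (idempotence, monotonicity, associativity through an element of $[0,a]\cup[a,1]$), and your closing remark correctly identifies the pitfall that $V(x,y)\le V(y,y)$ is unavailable when $x\not\le y$. The only stylistic point worth flagging is that in (viii) your three cases ($V(x,y)\le a$, $V(x,y)\ge a$, $V(x,y)\parallel a$) overlap when $V(x,y)=a$, but they are exhaustive and each lands in one of the three disjuncts of the statement, so the argument is complete.
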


\section{Characterization of idempotent nullnorms}

This section studies idempotent nullnorms with the zero element $a$ on a
bounded lattice containing two distinct elements incomparable with $a$ and
obtains an equivalent characterization for its existence. Moreover, some
basic properties for the bounded lattice containing two distinct element
incomparable with $a$ are obtained.

\begin{lemma}
\label{Ia-Operation} Let $(L, \leq, 0, 1)$ be a bounded lattice, $a\in
L\backslash \{0, 1\}$, and $V$ be an idempotent nullnorm on $L$ with the
zero element $a$. For any $x, y\in I_{a}$, if $(x\wedge a)\vee (y\wedge a)<a$
and $(x\vee a)\wedge (y\vee a)>a$, then $V(x, y)\in I_a$.
\end{lemma}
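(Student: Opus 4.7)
The plan is to leverage part (viii) of Lemma~\ref{stru-lemma}, which tells us that for $x,y\in I_a$ the product $V(x,y)$ must equal $(x\wedge a)\vee (y\wedge a)$, equal $(x\vee a)\wedge (y\vee a)$, or lie in $I_a$. Under the two strict inequalities in the hypothesis, the first candidate sits strictly below $a$ and the second strictly above $a$, so it suffices to rule both of them out via associativity.

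Suppose first that $V(x,y)=(x\wedge a)\vee (y\wedge a)<a$. I would compute $V\bigl(V(x,y),\,y\vee a\bigr)$ in two ways. Since $V(x,y)\in[0,a)$ and $y\vee a\in[a,1]$, part (vii) of Lemma~\ref{stru-lemma} gives the value $a$. On the other hand, associativity rewrites this as $V\bigl(x,\,V(y,y\vee a)\bigr)$; idempotency of $y$ together with part (vi) yields $V(y,y\vee a)=(y\vee a)\wedge(y\vee a)=y\vee a$, and a further application of part (vi) produces $V(x,y\vee a)=(x\vee a)\wedge(y\vee a)$. Equating the two answers gives $(x\vee a)\wedge(y\vee a)=a$, directly contradicting the hypothesis $(x\vee a)\wedge(y\vee a)>a$.

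The case $V(x,y)=(x\vee a)\wedge (y\vee a)>a$ is handled by the dual computation, this time looking at $V\bigl(V(x,y),\,y\wedge a\bigr)$. The same pair of rules---part (vii) on one side, associativity together with part (iv) and the idempotency of $y$ on the other---collapses the identity to $(x\wedge a)\vee(y\wedge a)=a$, which contradicts the other strict inequality in the hypothesis. With both extremal options excluded, part (viii) of Lemma~\ref{stru-lemma} forces $V(x,y)\in I_a$.

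The only real subtlety is picking the right auxiliary element to insert into the associativity identity---$y\vee a$ in the first case, $y\wedge a$ in the second---so that one factor of the triple product lands in the annihilating rectangle $[0,a]\times[a,1]\cup[a,1]\times[0,a]$ of part (vii). Once that choice is made, the argument is essentially bookkeeping with the structural formulas from Lemma~\ref{stru-lemma}, and no further ingredients are needed.
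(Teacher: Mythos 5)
Your argument is correct and follows essentially the same route as the paper: rule out the two non-$I_a$ alternatives of Lemma~\ref{stru-lemma}(\ref{viii}) by an associativity contradiction, using part (vii) on one side and parts (iv)/(vi) on the other. The only difference is cosmetic --- you insert $y\vee a$ (resp.\ $y\wedge a$) as the auxiliary element where the paper inserts $1$ (resp.\ $0$), and both choices collapse to the same identity $(x\vee a)\wedge(y\vee a)=a$ (resp.\ $(x\wedge a)\vee(y\wedge a)=a$), so the contradiction is identical.
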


\begin{proof}
For any $x, y\in I_{a}$, consider the following two cases:
\begin{itemize}
  \item[Case 1.] $V(x, y)=(x\wedge a)\vee (y\wedge a)<a$. Applying Lemma~\ref{stru-lemma}, it follows that
  $V(1, V(x, y))=a$ and $V(V(1, x), y)=V(x\vee a, y)=(x\vee a)\wedge (y\vee a)>a$, which contracts with the associativity of $V$;
  \item[Case 2.] $V(x, y)=(x\vee a)\wedge (y\vee a)>a$. Applying Lemma~\ref{stru-lemma}, it follows that
  $V(0, V(x, y))=a$ and $V(V(0, x), y)=V(x\wedge a, y)=(x\wedge a)\vee (y\wedge a)<a$, which contracts with the associativity of $V$.
\end{itemize}
This, together with Lemma \ref{stru-lemma}--(\ref{viii}), implies that $V(x, y)\in I_{a}$.
\end{proof}

\begin{theorem}
\label{Wu-Operation} Let $(L, \leq, 0, 1)$ be a bounded lattice, $a\in
L\backslash \{0, 1\}$, and $I_{a}=\{p, q\}$ with $p\neq q$. Then, there
exists an idempotent nullnorm on $L$ with the zero element $a$ if, and only
if, one of the following statements holds:

\begin{enumerate}[{\rm (i)}]
  \item\label{i} $(p\wedge a)\vee (q\wedge a)=a$;
  \item $(p\vee a)\wedge (q\vee a)=a$;
  \item $p\vee a\leq q\vee a$ and $q\wedge a\leq p\wedge a$;
  \item\label{iv} $p\wedge a\leq q\wedge a$ and $q\vee a\leq p\vee a$.
\end{enumerate}
\end{theorem}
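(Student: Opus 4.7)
The plan is to prove both directions of the equivalence separately. Necessity will follow quickly from Lemmas~\ref{stru-lemma} and~\ref{Ia-Operation} by analyzing the single value $V(p,q)$. Sufficiency requires an explicit construction of an idempotent nullnorm in each of the four cases, followed by verification of the nullnorm axioms.

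For necessity, let $V$ be an idempotent nullnorm with zero element $a$. By Lemma~\ref{stru-lemma}(viii), $V(p,q)$ equals $(p\wedge a)\vee(q\wedge a)$, equals $(p\vee a)\wedge(q\vee a)$, or lies in $I_a=\{p,q\}$. If $V(p,q)=(p\wedge a)\vee(q\wedge a)$, then $V(p,q)\in[0,a]$ and so $V(p,q)\notin I_a$; the contrapositive of Lemma~\ref{Ia-Operation} then forces $(p\wedge a)\vee(q\wedge a)=a$ or $(p\vee a)\wedge(q\vee a)=a$, giving (i) or (ii). The case $V(p,q)=(p\vee a)\wedge(q\vee a)$ is analogous. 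If $V(p,q)=p$, I apply monotonicity of $V$ in the first argument along $0\leq p\leq 1$ with second argument $q$, obtaining $V(0,q)\leq p\leq V(1,q)$; using Lemma~\ref{stru-lemma}(iii),(v) to compute $V(0,q)=q\wedge a$ and $V(1,q)=q\vee a$, this yields $q\wedge a\leq p\leq q\vee a$, from which condition (iii) follows by taking meet and join with $a$. The case $V(p,q)=q$ yields (iv) symmetrically.

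For sufficiency, in each of the four cases I define $V$ by the formulas of Lemma~\ref{stru-lemma} everywhere except on $I_a\times I_a$, and set $V(p,p)=p$, $V(q,q)=q$, together with $V(p,q)=V(q,p)=a$ in cases (i) and (ii), $V(p,q)=V(q,p)=p$ in case (iii), and $V(p,q)=V(q,p)=q$ in case (iv). Commutativity, idempotency on $L$, and the nullnorm boundary identities are built into the definition, so only monotonicity and associativity remain. Monotonicity reduces to verifying that the prescribed value of $V(p,q)$ lies in the interval $[(p\wedge a)\vee(q\wedge a),\,(p\vee a)\wedge(q\vee a)]$, which is precisely what the defining inequality of (i)--(iv) guarantees.

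The main obstacle is associativity. The strategy is to partition $L=[0,a]\cup[a,1]\cup I_a$ and exploit the containments $V([0,a]\times L)\subseteq[0,a]$ and $V([a,1]\times L)\subseteq[a,1]$, which collapse every triple $(x,y,z)$ having an argument in $[0,a]\cup[a,1]$ so that both sides of $V(V(x,y),z)=V(x,V(y,z))$ reduce to meets and joins computable directly from Lemma~\ref{stru-lemma}. The only genuinely new instances are those where multiple arguments lie in $I_a$ and the nested value $V(p,q)$ intervenes; these reduce to a handful of identities of the form $V(V(p,q),r)=V(p,V(q,r))$ with $r\in[0,a]\cup[a,1]\cup\{p,q\}$, and in every one of them the required equality collapses to the defining inequality of the operative case, e.g.\ $q\wedge a\leq p\wedge a$ in case (iii). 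I expect the bookkeeping of this case analysis to be the main technical burden, but not a conceptual obstruction.
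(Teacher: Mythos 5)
Your necessity argument is correct and takes a mildly different route from the paper: the paper assumes all four conditions fail and derives a contradiction from associativity with $0$ and $1$, whereas you case-split directly on the value of $V(p,q)$ given by Lemma~\ref{stru-lemma}--(\ref{viii}) and extract (iii)/(iv) from monotonicity along $0\leq p\leq 1$; both work. Your constructions for cases (iii) and (iv) ($V(p,q)=p$, resp.\ $V(p,q)=q$) coincide with the paper's $V_{3}$ and $V_{4}$, and your reduction of associativity to the defining inequalities is the right skeleton of the verification the paper carries out in full.

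The genuine gap is in cases (i) and (ii), where you set $V(p,q)=V(q,p)=a$. This passes your monotonicity test (since $(p\wedge a)\vee(q\wedge a)\leq a\leq (p\vee a)\wedge(q\vee a)$ always) but it is \emph{not} associative when only one of (i), (ii) holds. Concretely, suppose (i) holds but $(p\vee a)\wedge(q\vee a)>a$; for instance take $L=\{0,x_{1},x_{2},a,p,q,c,1\}$ with $x_{1}\wedge x_{2}=0$, $x_{1}\vee x_{2}=a$, $x_{1}<p$, $x_{2}<q$, $p\vee a=q\vee a=c$ and $a<c<1$, so that $I_{a}=\{p,q\}$ and $(p\wedge a)\vee(q\wedge a)=a$ while $(p\vee a)\wedge(q\vee a)=c>a$. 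Then
\[
V(1,V(p,q))=V(1,a)=a,\qquad V(V(1,p),q)=V(p\vee a,\,q)=(p\vee a)\wedge(q\vee a)=c>a,
\]
so associativity fails. The correct prescriptions, which the paper imports from \cite[Theorems 4 and 5]{Ca2020}, are $V(p,q)=(p\vee a)\wedge(q\vee a)$ in case (i) and $V(p,q)=(p\wedge a)\vee(q\wedge a)$ in case (ii): with these choices the hypothesis of the operative case forces the ``opposite'' composition ($V(0,\cdot)$ in case (i), $V(1,\cdot)$ in case (ii)) to collapse to $a$, and both sides of the critical triples agree. Your value $a$ works only when (i) and (ii) hold simultaneously; as stated, your sufficiency proof for cases (i) and (ii) does not go through.
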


\begin{proof}
{\it Necessity.}

Let $V$ be an idempotent nullnorm on $L$ with the zero element $a$ and suppose on the
contrary that none of statements (\ref{i})--(\ref{iv}) holds. This implies that the following hold:
\begin{itemize}
  \item[(i$^{\prime}$)] $(p\wedge a)\vee (q\wedge a)<a$;
  \item[(ii$^{\prime}$)] $(p\vee a)\wedge (q\vee a)>a$;
  \item[(iii$^{\prime}$)] $p\vee a\nleq q\vee a$ or $q\wedge a\nleq p\wedge a$;
  \item[(iv$^{\prime}$)] $p\wedge a\nleq q\wedge a$ or $q\vee a\nleq p\vee a$.
\end{itemize}
Applying Lemma~\ref{Ia-Operation}, (i$^{\prime}$) and (ii$^{\prime}$), it follows that $V(p, q)=V(q, p)\in I_{a}$. Noting that
$I_{a}=\{p, q\}$, it suffices to consider the following two cases:
\begin{enumerate}[(1)]
  \item If $V(p, q)=V(q, p)=p$, then
  $$
  V(0, V(p, q))=V(0, p)=p\wedge a, \quad V(V(0, p), q)=V(p\wedge a, q)=(p\wedge a)\vee (q\wedge a),
  $$
  and
  $$
  V(1, V(p, q))=V(1, p)=p\vee a, \quad V(V(1, p), q)=V(p\vee a, q)=(p\vee a)\wedge (q\vee a).
  $$
  This, together with the associativity of $V$, implies that
  $$
  p\wedge a=(p\wedge a)\vee (q\wedge a),
  $$
  and
  $$
  p\vee a=(p\vee a)\wedge (q\vee a),
  $$
  and thus $q\wedge a\leq p\wedge a$ and $p\vee a\leq q\vee a$. This contracts with (iii$^{\prime}$).

  \item If $V(p, q)=V(q, p)=q$, similarly to the above proof, it can be verified that $p\wedge a\leq q\wedge a$ and $q\vee a\leq p\vee a$,
  which contracts with (iv$^{\prime}$).
\end{enumerate}

{\it Sufficiency.}

\begin{enumerate}[{\rm (i)}]
  \item $(p\wedge a)\vee (q\wedge a)=a$. From \cite[Theorem~4]{Ca2020}, it follows that
  the binary operation $V_1: L^{2}\to L$ defined by
  $$
  V_{1}(x, y)=
  \begin{cases}
  x\vee y, & (x, y)\in [0, a]^{2}, \\
  x\wedge y, & (x, y)\in [a, 1]^{2}, \\
  a, & (x, y)\in \left([0, a]\times [a, 1]\right) \cup \left([a, 1]\times [0, a]\right), \\
  x\vee (y\wedge a), & (x, y)\in [0, a]\times I_{a}, \\
  y\vee (x\wedge a), & (x, y)\in I_{a}\times [0, a], \\
  x\wedge (y\vee a), & (x, y)\in [a, 1]\times I_{a}, \\
  y\wedge (x\vee a), & (x, y)\in I_{a}\times [a, 1], \\
  x, & x, y\in I_{a} \text{ and } x=y, \\
  (x\vee a)\wedge (y\vee a), & (x, y)\in \{(p, q), (q, p)\},
  \end{cases}
  $$
  is an idempotent nullnorm on $L$ with the zero element $a$.
  \item $(p\vee a)\wedge (q\vee a)=a$. From \cite[Theorem~5]{Ca2020}, it follows that
  the binary operation $V_2: L^{2}\to L$ defined by
  $$
  V_{2}(x, y)=
  \begin{cases}
  x\vee y, & (x, y)\in [0, a]^{2}, \\
  x\wedge y, & (x, y)\in [a, 1]^{2}, \\
  a, & (x, y)\in \left([0, a]\times [a, 1]\right) \cup \left([a, 1]\times [0, a]\right), \\
  x\vee (y\wedge a), & (x, y)\in [0, a]\times I_{a}, \\
  y\vee (x\wedge a), & (x, y)\in I_{a}\times [0, a], \\
  x\wedge (y\vee a), & (x, y)\in [a, 1]\times I_{a}, \\
  y\wedge (x\vee a), & (x, y)\in I_{a}\times [a, 1], \\
  x, & x, y\in I_{a} \text{ and } x=y, \\
  (x\wedge a)\vee (y\wedge a), & (x, y)\in \{(p, q), (q, p)\},
  \end{cases}
  $$
  is an idempotent nullnorm on $L$ with the zero element $a$.
  \item $p\vee a\leq q\vee a$ and $q\wedge a\leq p\wedge a$. We claim that the binary operation $V_3: L^{2}\to L$ defined by
  \begin{equation}\label{eq-1}
  V_{3}(x, y)=
  \begin{cases}
  x\vee y, & (x, y)\in [0, a]^{2}, \\
  x\wedge y, & (x, y)\in [a, 1]^{2}, \\
  a, & (x, y)\in \left([0, a]\times [a, 1]\right) \cup \left([a, 1]\times [0, a]\right), \\
  x\vee (y\wedge a), & (x, y)\in [0, a]\times I_{a}, \\
  y\vee (x\wedge a), & (x, y)\in I_{a}\times [0, a], \\
  x\wedge (y\vee a), & (x, y)\in [a, 1]\times I_{a}, \\
  y\wedge (x\vee a), & (x, y)\in I_{a}\times [a, 1], \\
  x, & x, y\in I_{a} \text{ and } x=y, \\
  p, & (x, y)\in \{(p, q), (q, p)\},
  \end{cases}
  \end{equation}
  is an idempotent nullnorm on $L$ with the zero element $a$.

  First, it can be verified that $V_{3}$ is a commutative binary operation with the zero element $a$.
It remains to check the monotonicity and associativity of $V_{3}$.

\begin{itemize}
  \item[iii-1)] {\it Monotonicity.} For any $x,y \in L$ with $x \leq y$, it holds that $V_{3}(x,z)\leq V_{3}(y,z)$ for all $z\in L$.
Consider the following cases:
\begin{itemize}
  \item[1.] $x\in [0, a)$. It is easy to see that
  \begin{equation}\label{eq-1.1}
  V_{3}(x, z)=
  \begin{cases}
  x\vee z, & z\in [0, a), \\
  a, & z\in [a, 1], \\
  x\vee (z\wedge a), & z\in I_{a}.
  \end{cases}
  \end{equation}
  \begin{itemize}
    \item[1.1.] If $y\in [0, a)$, applying \eqref{eq-1.1}, it is clear that $V_{3}(x, z)\leq V_{3}(y, z)$ holds for all $z\in L$.
    \item[1.2.] If $y\in [a, 1]$, applying \eqref{eq-1} and \eqref{eq-1.1}, it can be verified that
    \begin{align}
    V_{3}(y, z)&=
    \begin{cases}\label{eq-1.2}
    a, & z\in [0, a), \\
    y\wedge z, & z\in [a, 1], \\
    y\wedge (z\vee a), & z\in I_{a},
    \end{cases}\\
    &\geq
    \begin{cases}\nonumber
    x\vee z, & z\in [0, a), \\
    a, & z\in [a, 1], \\
    a, & z\in I_{a},
    \end{cases}\\
    & \geq V_{3}(x, z).\nonumber
    \end{align}
    \item[1.3.] If $y=p$, it follows from $x\leq y=p$, $0\leq x<a$, and $q\wedge a\leq p\wedge a$
    that $p\wedge a\geq x\wedge a= x$, $x\vee (p\wedge a)\leq p\vee (p\wedge a) \leq p$, and $x\vee (q\wedge a)\leq p\vee (q\wedge a)\leq p\vee (p\wedge a)=p$.
    These, together with \eqref{eq-1} and \eqref{eq-1.1}, imply that
    \begin{align*}
    V_{3}(y, z)=V_{3}(p, z)&=
    \begin{cases}
    z\vee (p\wedge a), & z\in [0, a), \\
    z\wedge (p\vee a), & z\in [a, 1], \\
    p, & z=p, \\
    p, & z=q,
    \end{cases}\\
    & \geq
    \begin{cases}
    z\vee x, & z\in [0, a), \\
    a, & z\in [a, 1], \\
    x\vee (p\wedge a), & z=p, \\
    x\vee (q\wedge a), & z=q,
    \end{cases}\\
    &= V_{3}(x, z).
    \end{align*}
    \item[1.4.] If $y=q$, it follows from $x\leq y=q$, $0\leq x<a$, and $q\wedge a\leq p\wedge a$
    that $q\wedge a\geq x\wedge a=x$, $x\vee (p\wedge a)\leq (q\wedge a)\vee (p\wedge a)=p\wedge a\leq p$,
    and $x\vee (q\wedge a)\leq q \vee (q\wedge a)=q$.
    These, together with \eqref{eq-1} and \eqref{eq-1.1}, imply that
    \begin{align*}
    V_{3}(y, z)=V_{3}(q, z)&=
    \begin{cases}
    z\vee (q\wedge a), & z\in [0, a), \\
    z\wedge (q\vee a), & z\in [a, 1], \\
    q, & z=q, \\
    p, & z=p,
    \end{cases}\\
    & \geq
    \begin{cases}
    z\vee x, & z\in [0, a), \\
    a, & z\in [a, 1], \\
    x\vee (q\wedge a), & z=q, \\
    x\vee (p\wedge a), & z=p,
    \end{cases}\\
    &= V_{3}(x, z).
    \end{align*}
  \end{itemize}

  \item[2.] $x\in [a, 1]$. From $x\leq y$, it is clear that $y\in [a, 1]$. Applying \eqref{eq-1.2},
  it can be verified that $V_{3}(x, z)\leq V_{3}(y, z)$ holds for all $z\in L$.

  \item[3.] $x\in I_{a}$. From $x\leq y$, it follows that $y\in [a, 1]\cup I_{a}$. Applying \eqref{eq-1},
  it can be verified that
  \begin{equation}\label{eq-2}
  V_{3}(x, z)=
    \begin{cases}
    z\vee (x\wedge a), & z\in [0, a), \\
    z\wedge (x\vee a), & z\in [a, 1], \\
    x, & z=x, \\
    p, & z\in I_{a}\backslash \{x\}.
    \end{cases}
  \end{equation}
  \begin{itemize}
    \item[3.1.] If $y\in I_{a}$, consider the following cases:
    \begin{itemize}
      \item[3.1.1.] $x=y$. It is clear that $V_{3}(x, z)\leq V_{3}(y, z)$ holds for all $z\in L$.
      \item[3.1.2.] $p=x<y=q$. Applying~\eqref{eq-2} yields that
    $$
    V_{3}(x, z)=V_{3}(p, z)=
    \begin{cases}
    z\vee (p\wedge a), & z\in [0, a), \\
    z\wedge (p\vee a), & z\in [a, 1], \\
    p, & z=p, \\
    p, & z=q,
    \end{cases}
    $$
    and
    $$
    V_{3}(y, z)=V_{3}(q, z)=
    \begin{cases}
    z\vee (q\wedge a), & z\in [0, a), \\
    z\wedge (q\vee a), & z\in [a, 1], \\
    p, & z=p,\\
    q, & z=q,
    \end{cases}
    $$
    implying that $V_{3}(x, z)\leq V_{3}(y, z)$ holds for all $z\in L$.

    \item[3.1.3.] $q=x<y=p$. Applying~\eqref{eq-2} yields that
    $$
    V_{3}(x, z)=V_{3}(q, z)=
    \begin{cases}
    z\vee (q\wedge a), & z\in [0, a), \\
    z\wedge (q\vee a), & z\in [a, 1], \\
    p, & z=p,\\
    q, & z=q.
    \end{cases}
    $$
    and
    $$
    V_{3}(y, z)=V_{3}(p, z)=
    \begin{cases}
    z\vee (p\wedge a), & z\in [0, a), \\
    z\wedge (p\vee a), & z\in [a, 1], \\
    p, & z=p, \\
    p, & z=q,
    \end{cases}
    $$
    implying that $V_{3}(x, z)\leq V_{3}(y, z)$ holds for all $z\in L$.
    \end{itemize}

    \item[3.2.] If $y\in [a, 1]$, consider the following cases:
    \begin{itemize}
      \item[3.2.1.] $x=p$. From $p=x\leq y$ and $p\vee a\leq q\vee a$, it follows that
      $y\wedge (p\vee a)\geq x\wedge (p\vee a)=p\wedge (p\vee a)=p$
      and $y\wedge (q\vee a)\geq x\wedge (q\vee a)\geq x\wedge (p\vee a)=p\wedge (p\vee a)=p$. These, together with \eqref{eq-2} and \eqref{eq-1.2}, imply that
    \begin{align*}
    V_{3}(x, z)=V_{3}(p, z)&=
    \begin{cases}
    z\vee (p\wedge a), & z\in [0, a), \\
    z\wedge (p\vee a), & z\in [a, 1], \\
    p, & z=p, \\
    p, & z=q,
    \end{cases}\\
    & \leq
    \begin{cases}
    a, & z\in [0, a), \\
    z\wedge (y\vee a), & z\in [a, 1], \\
    y\wedge (p\vee a), & z=p, \\
    y\wedge (q\vee a), & z=q,
    \end{cases}\\
    &=V_{3}(y, z).
    \end{align*}
      \item[3.2.2.] $x=q$. From $q=x\leq y$, $a\leq y\leq 1$, and $p\vee a\leq q\vee a$,
      it follows that $y\wedge (p\vee a)=(y\vee a)\wedge (p\vee a)\geq (q\vee a)\wedge (p\vee a)
      =p\vee a\geq p$ and $y\wedge (q\vee a)\geq q \wedge (q\vee a)=q$. These, together with \eqref{eq-2} and \eqref{eq-1.2}, imply that
    \begin{align*}
    V_{3}(x, z)=V_{3}(q, z)&=
    \begin{cases}
    z\vee (q\wedge a), & z\in [0, a), \\
    z\wedge (q\vee a), & z\in [a, 1], \\
    q, & z=q, \\
    p, & z=p,
    \end{cases}\\
    & \leq
    \begin{cases}
    a, & z\in [0, a), \\
    z\wedge (y\vee a), & z\in [a, 1], \\
    y\wedge (q\vee a), & z=q, \\
    y\wedge (p\vee a), & z=p,
    \end{cases}\\
    &=V_{3}(y, z).
    \end{align*}
    \end{itemize}
  \end{itemize}
\end{itemize}
  \item[iii-2)] {\it Associativity.} For any $x, y, z\in L$, it holds that $V_{3}(x,V_{3}(y,z))=V_{3}(V_{3}(x,y),z)$.
  Consider the following cases:

\begin{itemize}

\item[1.] $x\in [0, a]$.
\begin{itemize}
\item[1.1.] $y\in [0, a]$.
\begin{itemize}
\item[1.1.1.] If $z\in [0, a]$, this holds trivially.

\item[1.1.2.] If $z\in [a, 1]$, then $V_{3}(x, V_{3}(y, z))=V_{3}(x, a)=a=V_{3}(x\vee y, z)
=V_{3}(V_{3}(x, y), z)$.

\item[1.1.3.] If $z\in I_{a}$, then $V_{3}(x, V_{3}(y, z))=V_{3}(x, y\vee (z\wedge a))
=x\vee (y \vee (z\wedge a))=(x\vee y)\vee (z\wedge a)=V_{3}(x, y)\vee (z\wedge a)=
V_{3}(V_{3}(x, y), z)$.
\end{itemize}

\item[1.2.] $y\in [a, 1]$.
\begin{itemize}
\item[1.2.1.] If $z\in [0, a]$, then $V_{3}(x, V_{3}(y, z))=V_{3}(x, a)=a=V_{3}(a, z)=V_{3}(V_{3}(x, y), z)$.

\item[1.2.2.] If $z\in [a, 1]$, then $V_{3}(x, V_{3}(y, z))=V_{3}(x, y\wedge z)=a=
V_{3}(a, z)=V_{3}(V_{3}(x, y), z)$.

\item[1.2.3.] If $z\in I_{a}$, then $V_{3}(x, V_{3}(y, z))=V_{3}(x, y\wedge (z\vee a))=a
=V_{3}(a, z)=V_{3}(V_{3}(x, y), z)$.
\end{itemize}

\item[1.3.] $y\in I_{a}$.
\begin{itemize}
\item[1.3.1.] If $z\in [0, a]$, then $V_{3}(x, V_{3}(y, z))=V_{3}(x, (y\wedge a)\vee z)
=x\vee (y\wedge a)\vee z=V_{3}(x, y)\vee z
=V_{3}(V_{3}(x, y), z)$.

\item[1.3.2.] If $z\in [a, 1]$, then $V_{3}(x, V_{3}(y, z))=V_{3}(x, (y\vee a)\wedge z)=a
=V_{3}(x\vee (y\wedge a), z)=V_{3}(V_{3}(x, y), z)$.

\item[1.3.3.] If $z\in I_{a}$, consider the following cases:

1.3.3.1. $y=z$. It can be verified that
$$
V_{3}(x, V_{3}(y, z))=V_{3}(x, y)=x\vee (y\wedge a),
$$
and
$$
V_{3}(V_{3}(x, y), z)=V_{3}(x\vee (y\wedge a), y)=x\vee (y\wedge a)\vee (y\wedge a)=x\vee (y\wedge a),
$$
implying that $V_{3}(x, V_{3}(y, z))=V_{3}(V_{3}(x, y), z)$.

1.3.3.2. $y\neq z$. Applying $q\wedge a\leq p\wedge a$, it can be verified that
$$
V_{3}(x, V_{3}(y, z))=V_{3}(x, p)=x\vee (p\wedge a),
$$
and
$$
V_{3}(V_{3}(x, y), z)=V_{3}(x\vee (y\wedge a), z)=x\vee (y\wedge a)\vee (z\wedge a)=x\vee (p\wedge a),
$$
implying that $V_{3}(x, V_{3}(y, z))=V_{3}(V_{3}(x, y), z)$.

\end{itemize}
\end{itemize}

\item[2.] $x\in [a, 1]$.
\begin{itemize}
\item[2.1.] $y\in [0, a]$.

\begin{itemize}
\item[2.1.1.] If $z\in [0, a]$, then
\begin{align*}
V_{3}(x, V_{3}(y, z))&=V_{3}(V_{3}(z, y), x)~~ (\text{commutativity of } V_{3})\\
&= V_{3}(z, V_{3}(y, x))~~ (\text{by 1.1.2})\\
&=V_{3}(V_{3}(x, y), z)~~ (\text{commutativity of } V_{3}).
\end{align*}

\item[2.1.2.] If $z\in [a, 1]$, then $V_{3}(x, V_{3}(y, z))=V_{3}(x, a)=a=
V_{3}(a, z)=V_{3}(V_{3}(x, y), z)$.

\item[2.1.3.] If $z\in I_{a}$, then $V_{3}(x, V_{3}(y, z))=V_{3}(x, y\vee (z\wedge a))=
a=V_{3}(a, z)=V_{3}(V_{3}(x, y), z)$.
\end{itemize}

\item[2.2.] $y\in [a, 1]$.
\begin{itemize}
\item[2.2.1.] If $z\in [0, a)$, then
\begin{align*}
V_{3}(x, V_{3}(y, z))&=V_{3}(V_{3}(z, y), x)~~ (\text{commutativity of } V_{3})\\
&=V_{3}(z, V_{3}(y, x))~~ (\text{by 1.2.2})\\
&=V_{3}(V_{3}(x, y), z)~~ (\text{commutativity of } V_{3}).
\end{align*}

\item[2.2.2.] If $z\in [a, 1]$, this holds trivially.

\item[2.2.3.] If $z\in I_{a}$, then
$V_{3}(x, V_{3}(y, z))=V_{3}(x, y\wedge (z\vee a))=x\wedge y \wedge (z\vee a)
=V_{3}(x, y)\wedge (z\vee a)=
V_{3}(V_{3}(x, y), z)$.
\end{itemize}

\item[2.3.] $y\in I_{a}$.
\begin{itemize}

\item[2.3.1.] If $z\in [0, a]$, then
\begin{align*}
V_{3}(x, V_{3}(y, z))&=V_{3}(V_{3}(z, y), x) ~~ (\text{commutativity of } V_{3})\\
&=V_{3}(z, V_{3}(y, x))~~ (\text{by 1.3.2})\\
&=V_{3}(V_{3}(x, y), z)~~ (\text{commutativity of } V_{3}).
\end{align*}

\item[2.3.2.] If $z\in [a, 1]$, then
$V_{3}(x, V_{3}(y, z))=V_{3}(x, (y\vee a)\wedge z)=x\wedge (y\vee a)\wedge z
=V_{3}(x, y)\wedge z=
V_{3}(V_{3}(x, y), z)$.

\item[2.3.3.] If $z\in I_{a}$, consider the following cases:

2.3.3.1. $y=z$. It can be verified that
$$
V_{3}(x, V_{3}(y, z))=V_{3}(x, y)=x\wedge (y\vee a),
$$
and
$$
V_{3}(V_{3}(x, y), z)=V_{3}(x\wedge (y\vee a), y)=x\wedge (y\vee a)\wedge (y\vee a)=x\wedge (y\vee a),
$$
implying that $V_{3}(x, V_{3}(y, z))=V_{3}(V_{3}(x, y), z)$.

2.3.3.2. $y\neq z$. Applying $p\vee a\leq q\vee a$, it can be verified that
$$
V_{3}(x, V_{3}(y, z))=V_{3}(x, p)=x\wedge (p\vee a),
$$
and
$$
V_{3}(V_{3}(x, y), z)=V_{3}(x\wedge (y\vee a), z)=x\wedge (y\vee a)\wedge (z\vee a)
=x\wedge (p\vee a),
$$
implying that $V_{3}(x, V_{3}(y, z))=V_{3}(V_{3}(x, y), z)$.

\end{itemize}
\end{itemize}

\item[3.] $x\in I_{a}$.
\begin{itemize}
\item[3.1.] $y\in [0, a]$.
\begin{itemize}
\item[3.1.1.] If $z\in [0, a]$, then
\begin{align*}
V_{3}(x, V_{3}(y, z))&=V_{3}(V_{3}(z, y), x) ~~ (\text{commutativity of } V_{3})\\
&=V_{3}(z, V_{3}(y, x))~~ (\text{by 1.1.3})\\
&=V_{3}(V_{3}(x, y), z)~~ (\text{commutativity of } V_{3}).
\end{align*}

\item[3.1.2.] If $z\in [a, 1]$, then
\begin{align*}
V_{3}(x, V_{3}(y, z))&=V_{3}(V_{3}(z, y), x) ~~ (\text{commutativity of } V_{3})\\
&=V_{3}(z, V_{3}(y, x))~~ (\text{by 2.1.3})\\
&=V_{3}(V_{3}(x, y), z)~~ (\text{commutativity of } V_{3}).
\end{align*}

\item[3.1.3.] If $z\in I_{a}$, then $V_{3}(x, V_{3}(y, z))=V_{3}(x, y\vee (z\wedge a))
=(x\wedge a)\vee y\vee (z\wedge a)=V_{3}(x, y)\vee (z\wedge a)
=V_{3}(V_{3}(x, y), z)$.
\end{itemize}

\item[3.2.] $y\in [a, 1]$.
\begin{itemize}
\item[3.2.1.] If $z\in [0, a]$, then
\begin{align*}
V_{3}(x, V_{3}(y, z))&=V_{3}(V_{3}(z, y), x)~~ (\text{commutativity of } V_{3})\\
&= V_{3}(z, V_{3}(y, x))~~ (\text{by 1.2.3})\\
&=V_{3}(V_{3}(x, y), z)~~ (\text{commutativity of } V_{3}).
\end{align*}

\item[3.2.2.] If $z\in [a, 1]$, then
\begin{align*}
V_{3}(x, V_{3}(y, z))&=V_{3}(V_{3}(z, y), x)~~ (\text{commutativity of } V_{3})\\
&=V_{3}(z, V_{3}(y, x))~~ (\text{by 2.2.3})\\
&=V_{3}(V_{3}(x, y), z)~~ (\text{commutativity of } V_{3}).
\end{align*}

\item[3.2.3.] If $z\in I_{a}$, then
$V_{3}(x, V_{3}(y, z))=V_{3}(x, y\wedge (z\vee a))=
(x\vee a)\wedge y\wedge (z\vee a)
=V_{3}(x, y)\wedge (z\vee a)=
V_{3}(V_{3}(x, y), z)$.
\end{itemize}

\item[3.3.] $y\in I_{a}$.
\begin{itemize}
\item[3.3.1.] If $z\in [0, a]$, then
\begin{align*}
V_{3}(x, V_{3}(y, z))&=V_{3}(V_{3}(z, y), x)~~ (\text{commutativity of } V_{3})\\
&=V_{3}(z, V_{3}(y, x))~~ (\text{by 1.3.3})\\
&=V_{3}(V_{3}(x, y), z)~~ (\text{commutativity of } V_{3}).
\end{align*}

\item[3.3.2.] If $z\in [a, 1]$, then
\begin{align*}
V_{3}(x, V_{3}(y, z))&=V_{3}(V_{3}(z, y), x)~~ (\text{commutativity of } V_{3})\\
&=V_{3}(z, V_{3}(y, x))~~ (\text{by 2.3.3})\\
&=V_{3}(V_{3}(x, y), z)~~ (\text{commutativity of } V_{3}).
\end{align*}

\item[3.3.3.] If $z\in I_{a}$, it is not difficult to check that
$$
V_{3}(x, V_{3}(y, z))=
\begin{cases}
q, & x=y=z=q, \\
p, & \text{otherwise},
\end{cases}
$$
and
$$
V_{3}(V_{3}(x, y), z)=
\begin{cases}
q, & x=y=z=q, \\
p, & \text{otherwise}.
\end{cases}
$$

\end{itemize}
\end{itemize}
\end{itemize}
\end{itemize}
 \item $p\wedge a\leq q\wedge a$ and $q\vee a\leq p\vee a$. Similarly to the proof of (iii), it can be verified that
 the binary operation $V_4: L^{2}\to L$ defined by
  \begin{equation}\label{eq-2.5}
  V_{4}(x, y)=
  \begin{cases}
  x\vee y, & (x, y)\in [0, a]^{2}, \\
  x\wedge y, & (x, y)\in [a, 1]^{2}, \\
  a, & (x, y)\in \left([0, a]\times [a, 1]\right) \cup \left([a, 1]\times [0, a]\right), \\
  x\vee (y\wedge a), & (x, y)\in [0, a]\times I_{a}, \\
  y\vee (x\wedge a), & (x, y)\in I_{a}\times [0, a], \\
  x\wedge (y\vee a), & (x, y)\in [a, 1]\times I_{a}, \\
  y\wedge (x\vee a), & (x, y)\in I_{a}\times [a, 1], \\
  x, & x, y\in I_{a} \text{ and } x=y, \\
  q, & (x, y)\in \{(p, q), (q, p)\},
  \end{cases}
  \end{equation}
  is an idempotent nullnorm on $L$ with the zero element $a$.
\end{enumerate}
\end{proof}

\begin{corollary}
\label{Nonexistence-Cor} If a bounded lattice $L$ contains a sublattice
which is isomorphic to a sublattice characterized by Hasse diagram in Fig.~%
\ref{FIG:1}, then there is no idempotent nullnorm on $L$ with the zero
element $a$.
\begin{figure}[h]
\begin{center}
\scalebox{0.7}{\includegraphics{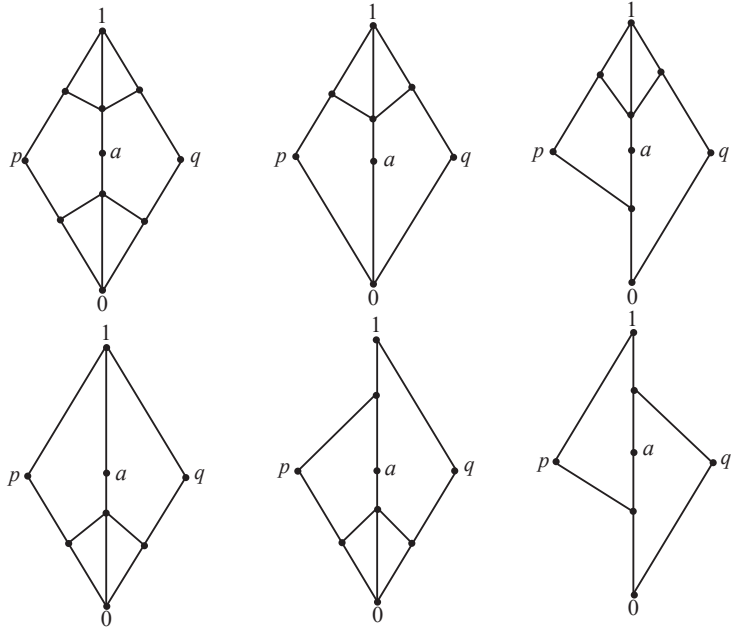}}
\end{center}
\caption{The lattices in Corollary~\protect\ref{Nonexistence-Cor}}
\label{FIG:1}
\end{figure}
\end{corollary}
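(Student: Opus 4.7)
The plan is a proof by contradiction. Suppose, towards a contradiction, that $V$ is an idempotent nullnorm on $L$ with zero element $a$. Let $S\subseteq L$ be the sublattice isomorphic to (one of) the Hasse diagrams in Fig.~\ref{FIG:1}, and let $p,q\in S$ be the two elements corresponding under the isomorphism to the elements incomparable with $a$; these remain incomparable with $a$ in $L$, so $p,q\in I_a$.

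First I would read off directly from Fig.~\ref{FIG:1} that the elements $p,q$ satisfy the four negated conditions $(\mathrm{i}')$--$(\mathrm{iv}')$ appearing in the necessity half of Theorem~\ref{Wu-Operation}: namely, $(p\wedge a)\vee(q\wedge a)<a$, $(p\vee a)\wedge(q\vee a)>a$, and neither of the two comparability patterns in items (iii) and (iv) of Theorem~\ref{Wu-Operation} holds. Lemma~\ref{Ia-Operation} then yields $V(p,q)\in I_a$; set $r:=V(p,q)$.

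Next I would repeat the associativity computations from the necessity proof of Theorem~\ref{Wu-Operation}, noting that they do not require the assumption $I_a=\{p,q\}$. Combining the identities $V(0,V(p,q))=V(V(0,p),q)$ and $V(1,V(p,q))=V(V(1,p),q)$ with Lemma~\ref{stru-lemma} gives
$$
r\wedge a=(p\wedge a)\vee(q\wedge a), \qquad r\vee a=(p\vee a)\wedge(q\vee a).
$$

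Finally, I would split on the identity of $r$. If $r=p$, the two identities above reproduce exactly condition (iii) of Theorem~\ref{Wu-Operation}, contradicting the first step; the case $r=q$ is symmetric and contradicts (iv). The main obstacle — and where the specific shape of the Hasse diagrams in Fig.~\ref{FIG:1} is essential — is ruling out a third possibility $r\in I_a\setminus\{p,q\}$: one must verify, by inspection of each diagram in Fig.~\ref{FIG:1}, that no element of $L$ incomparable with $a$ can simultaneously satisfy $r\wedge a=(p\wedge a)\vee(q\wedge a)$ and $r\vee a=(p\vee a)\wedge(q\vee a)$ while remaining consistent with the prescribed sublattice structure. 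Once this case is excluded, the two remaining cases $r=p$ and $r=q$ both fail, giving the required contradiction.
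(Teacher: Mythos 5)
Your reduction to Theorem~\ref{Wu-Operation} is the right instinct --- the paper's own proof is literally the single line ``it follows directly from Theorem~\ref{Wu-Operation}'' --- but the extra case you yourself identify as the crux is not closed by your argument, and the way you propose to close it cannot work. Once $L$ is only assumed to \emph{contain} the depicted configuration as a sublattice, the set $I_a$ computed in $L$ may be strictly larger than $\{p,q\}$, and the element $r=V(p,q)\in I_a$ produced by Lemma~\ref{Ia-Operation} need not belong to the sublattice $S$ at all. Your proposed resolution --- verifying ``by inspection of each diagram in Fig.~\ref{FIG:1}'' that no element of $L$ incomparable with $a$ satisfies $r\wedge a=(p\wedge a)\vee(q\wedge a)$ and $r\vee a=(p\vee a)\wedge(q\vee a)$ --- is not a check that can be performed: the diagrams constrain only the elements of $S$, whereas $r$ ranges over all of $I_a\subseteq L$, about which the hypothesis says nothing. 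The two identities you derive are perfectly compatible with the existence of such an $r$ in a larger ambient lattice, so no contradiction materializes in that case. This is a genuine gap, not a routine verification that was merely deferred.

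For comparison, the paper sidesteps the issue entirely by reading Fig.~\ref{FIG:1} (captioned ``the lattices in Corollary~\ref{Nonexistence-Cor}'') as depicting the whole of $L$, so that $I_a=\{p,q\}$ exactly; the necessity half of Theorem~\ref{Wu-Operation} then applies verbatim and your third case is vacuous --- under that reading, your first five steps are just a re-derivation of the theorem's necessity proof. Under the literal ``contains a sublattice'' reading that you adopt, the statement does not follow from Theorem~\ref{Wu-Operation} alone: one would have to either rule out $V(p,q)\in I_a\setminus\{p,q\}$ by a further monotonicity/associativity argument valid for arbitrary ambient lattices, or strengthen the hypothesis to $I_a=\{p,q\}$ in $L$. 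As written, your proof does not establish the corollary in the generality you aim for.
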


\begin{proof}
It follows directly from Theorem~\ref{Wu-Operation}.
\end{proof}

\begin{corollary}
Let $(L, \leq, 0, 1)$ be a bounded lattice, $a\in L\backslash \{0, 1\}$, and
$I_{a}=\{p, q\}$ such that $p\neq q$ and $p\nparallel q$. Then, we have $%
p\wedge a=q\wedge a$ or $p\vee a=q\vee a$.
\end{corollary}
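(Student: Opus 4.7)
The plan is to reduce to the case $p \le q$ and then, assuming the conclusion fails, to exhibit an element of $I_{a}$ strictly between $p$ and $q$, contradicting $I_{a}=\{p,q\}$. Since $p\nparallel q$, I may suppose $p\le q$; the opposite case is handled by swapping the roles of $p$ and $q$. From $p\le q$ I get $p\wedge a\le q\wedge a$ and $p\vee a\le q\vee a$, so negating the conclusion amounts to the strict configuration $p\wedge a< q\wedge a$ and $p\vee a< q\vee a$, which I shall assume and from which I shall derive a contradiction.

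Introduce the auxiliary element $r=p\vee(q\wedge a)$. First I observe that $p\le r\le q$, the upper bound coming from $p\le q$ together with $q\wedge a\le q$. Next I rule out the endpoints. If $r=p$, then $q\wedge a\le p$, so $q\wedge a\le p\wedge a$, contradicting $p\wedge a< q\wedge a$. If $r=q$, then I join both sides of $p\vee(q\wedge a)=q$ with $a$ and use the identity $(q\wedge a)\vee a=a$ to conclude $p\vee a=q\vee a$, contradicting $p\vee a< q\vee a$. Hence $p<r<q$.

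Finally, I verify that $r\in I_{a}$: if $r\le a$ then $p\le r\le a$, violating $p\in I_{a}$, while if $a\le r$ then $a\le r\le q$, violating $q\in I_{a}$. Therefore $r$ is incomparable with $a$ and $r\notin\{p,q\}$, contradicting $I_{a}=\{p,q\}$. I expect the only nonobvious step to be the case $r=q$ in the second paragraph, which hinges on the small but essential absorption $(q\wedge a)\vee a=a$ to convert the equality $p\vee(q\wedge a)=q$ into the forbidden equality $p\vee a=q\vee a$; the remaining items are routine unfoldings of the lattice operations.
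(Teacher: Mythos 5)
Your proof is correct, but it takes a genuinely different route from the paper's. The paper derives the corollary from its nullnorm machinery: since $p\nparallel q$, the existence result \cite[Theorem 3]{Ca2020-2} guarantees an idempotent nullnorm on $L$ with zero element $a$; feeding this into Theorem~\ref{Wu-Operation} and noting that conditions (i) and (ii) there fail (with $p\leq q$ one has $(p\wedge a)\vee(q\wedge a)=q\wedge a<a$ and $(p\vee a)\wedge(q\vee a)=p\vee a>a$) forces condition (iii) or (iv), from which the dichotomy follows. Your argument is instead purely order-theoretic: assuming $p\leq q$, $p\wedge a<q\wedge a$ and $p\vee a<q\vee a$, you show that $r=p\vee(q\wedge a)$ satisfies $p<r<q$ and $r\Vert a$, so $I_{a}$ would contain a third element. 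Every step checks out --- in particular the absorption $(q\wedge a)\vee a=a$ in the case $r=q$, and the observations that $r\leq a$ would force $p\leq a$ while $a\leq r$ would force $a\leq q$. What your approach buys is self-containedness: the statement is a pure lattice fact about $|I_{a}|=2$, and your proof exposes it as such, with no appeal to the existence of idempotent nullnorms or to the characterization theorem. What the paper's approach buys is brevity given the machinery already in place, together with an illustration of how that characterization constrains the order structure of $I_{a}$.
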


\begin{proof}
Without loss of generality, assume that $p\leq q$. This implies that $(p\wedge a) \vee (q\wedge a)=q\wedge a<a$ and
$(p\vee a)\wedge (q\vee a)=p\vee a>a$. From \cite[Theorem 3]{Ca2020-2}, it follows that
there exists an idempotent nullnorm on $L$ with the zero element $a$. These, together with Theorem~\ref{Wu-Operation},
imply that
\begin{enumerate}[(a)]
  \item $p\vee a\leq q\vee a$ and $q\wedge a\leq p\wedge a$; or
  \item $p\wedge a\leq q\wedge a$ and $q\vee a\leq p\vee a$.
\end{enumerate}
If $p\wedge a=q\wedge a$, the theorem is completed. Otherwise, if $p\wedge a< q\wedge a$, then (b) is satisfied,
implying that $q\vee a\leq p\vee a$. This, together with $p\leq q$, implies that $p\vee a=q\vee a$.
\end{proof}

\begin{corollary}
\label{cor1-1} Let $\left( L,\leq ,0,1\right) $ be a bounded lattice, $a\in
L\backslash \left\{ 0,1\right\} $ and $I_{a}=\left\{ p,q\right\} $ such that
$p\neq q$ and $p\nparallel q$. Then, the binary operation $V_{5}:L\times
L\rightarrow L$ defined by
\begin{equation}
V_{5}\left( x,y\right) =%
\begin{cases}
x\vee y, & (x,y)\in \lbrack 0,a]^{2}, \\
x\wedge y, & (x,y)\in \lbrack a,1]^{2}, \\
a, & (x,y)\in \left( \lbrack 0,a]\times \lbrack a,1]\right) \cup \left(
\lbrack a,1]\times \lbrack 0,a]\right) , \\
x\vee (y\wedge a), & (x,y)\in \lbrack 0,a]\times I_{a}, \\
y\vee (x\wedge a), & (x,y)\in I_{a}\times \lbrack 0,a], \\
x\wedge (y\vee a), & (x,y)\in \lbrack a,1]\times I_{a}, \\
y\wedge (x\vee a), & (x,y)\in I_{a}\times \lbrack a,1], \\
p\vee q, & (x,y)\in \left\{ (p,q),(q,p),(p,p),(q,q)\right\}
\end{cases}
\label{e1}
\end{equation}%
is an idempotent nullnorm on $L$ with the zero element $a$ if and only if $%
p\vee a=q\vee a.$
\end{corollary}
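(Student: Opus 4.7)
The plan is to prove both directions by relating $V_{5}$ to the operations $V_{3}$ and $V_{4}$ already constructed inside the proof of Theorem~\ref{Wu-Operation}, and to settle the converse by a single associativity test.

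For sufficiency, I would assume $p\vee a=q\vee a$ and, since $p\nparallel q$, take $p\leq q$ without loss of generality (the case $q\leq p$ is symmetric and leads to $V_{5}=V_{3}$). From $p\leq q$ one has $p\wedge a\leq q\wedge a$ automatically, and the hypothesis gives $q\vee a\leq p\vee a$; these are precisely the inequalities of condition (iv) of Theorem~\ref{Wu-Operation}. I would then observe that the formula \eqref{e1} for $V_{5}$ agrees clause-by-clause with the formula \eqref{eq-2.5} for $V_{4}$: the interval pieces are literally identical, and on the remaining off-diagonal entry one has $V_{5}(p,q)=p\vee q=q=V_{4}(p,q)$. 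Hence $V_{5}=V_{4}$, and Theorem~\ref{Wu-Operation} guarantees that $V_{4}$ is an idempotent nullnorm on $L$ with zero element $a$.

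For necessity, suppose $V_{5}$ is an idempotent nullnorm, and again take $p\leq q$ without loss of generality. I would test associativity on the triple $(1,p,q)$. By the $[a,1]\times I_{a}$ clause, $V_{5}(1,p)=1\wedge(p\vee a)=p\vee a$, so
\begin{equation*}
V_{5}\bigl(V_{5}(1,p),q\bigr)=V_{5}(p\vee a,q)=(p\vee a)\wedge(q\vee a).
\end{equation*}
On the other hand, by the last clause of \eqref{e1}, $V_{5}(p,q)=p\vee q=q$, and therefore $V_{5}\bigl(1,V_{5}(p,q)\bigr)=V_{5}(1,q)=q\vee a$. Associativity forces $(p\vee a)\wedge(q\vee a)=q\vee a$, i.e.\ $q\vee a\leq p\vee a$; combined with $p\vee a\leq q\vee a$ obtained from $p\leq q$, this yields $p\vee a=q\vee a$, as desired.

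The main subtlety, rather than a genuine obstacle, is the identification of $V_{5}$ with one of the operations $V_{3}$ or $V_{4}$ once an orientation between $p$ and $q$ is fixed; with that identification, sufficiency is a direct appeal to Theorem~\ref{Wu-Operation}, and necessity comes for free from associativity on the single triple $(1,p,q)$ (symmetrically, $(0,p,q)$ would yield the dual equality $p\wedge a=q\wedge a$, which is already known by the preceding corollary to be consistent with the constraint).
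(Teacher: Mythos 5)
Your necessity argument is the same as the paper's: both test associativity on the triple $(1,p,q)$, compute $V_{5}(1,V_{5}(p,q))=q\vee a$ versus $V_{5}(V_{5}(1,p),q)=(p\vee a)\wedge(q\vee a)$, and conclude $p\vee a=q\vee a$. Your sufficiency argument, however, takes a genuinely different route. The paper does not identify $V_{5}$ with $V_{4}$; instead it verifies the identity $p\vee q=(p\vee q)\wedge(p\vee a)\wedge(q\vee a)$ and then invokes the external construction theorem \cite[Theorem 3]{Ca2020-2}. You instead note that $p\leq q$ together with $p\vee a=q\vee a$ gives exactly condition (iv) of Theorem~\ref{Wu-Operation}, identify $V_{5}$ with $V_{4}$, and appeal to the theorem already proved in this paper. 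Your route is more self-contained (it needs no result outside the paper), while the paper's route is shorter because the cited theorem directly covers the $I_{a}\times I_{a}$ clause of \eqref{e1} without requiring a case split on the orientation of $p$ and $q$.

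One point deserves a flag. You assert that \eqref{e1} agrees with \eqref{eq-2.5} ``clause-by-clause'' and check only the off-diagonal entry $V_{5}(p,q)=p\vee q=q=V_{4}(p,q)$. But the last clause of \eqref{e1} also covers the diagonal pairs $(p,p)$ and $(q,q)$, and read literally it gives $V_{5}(p,p)=p\vee q=q\neq p=V_{4}(p,p)$ when $p<q$ --- which would even contradict idempotency of $V_{5}$. This is evidently a defect in the statement of \eqref{e1} rather than in your reasoning: the remark immediately following the corollary makes the very same identification $V_{5}=V_{4}$, and the construction of \cite[Theorem 3]{Ca2020-2} used in the paper's own sufficiency proof returns $x$ on the diagonal of $I_{a}^{2}$. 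Under the intended reading (the constant value $p\vee q$ applying only to $(p,q)$ and $(q,p)$, the diagonal being handled by idempotency), your identification and hence your proof are correct; you should, however, state explicitly that the diagonal entries also match rather than calling them ``off-diagonal'' and passing over them. A minor further quibble: your closing parenthetical is off --- testing associativity on $(0,p,q)$ with $p\leq q$ yields $q\wedge a$ on both sides and so gives no constraint, not the equality $p\wedge a=q\wedge a$; this does not affect the argument.
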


\begin{proof}
\textit{Necessity.}
Let the function $V_{5}:L\times L\rightarrow L$ defined
by the formula (\ref{e1}) be an idempotent nullnorm on $L$ with the zero
element $a.$ Without loss of generality, suppose that $p\leq q.$ In this
case, we have $V_{5}\left( 1,V_{5}\left( p,q\right) \right) =V_{5}\left(
1,p\vee q\right) =V_{5}\left( 1,q\right) =q\vee a$ and $V_{5}\left(
V_{5}\left( 1,p\right) ,q\right) =V_{5}\left( p\vee a,q\right) =\left( p\vee
a\right) \wedge \left( q\vee a\right) =p\vee a.$ This, together with the
associativity of $V_{5}$, implies that $p\vee a=q\vee a$.

\medskip

\textit{Sufficiency.} Let $p\vee a=q\vee a.$ Without loss of generality,
suppose that $p\leq q$. In this case, we have $\left( p\vee q\right) \wedge
\left( p\vee a\right) \wedge \left( q\vee a\right) =q\wedge \left( q\vee
a\right) =q.$ This, together with $p\vee q=q$, implies that $p\vee q=\left(
p\vee q\right) \wedge \left( p\vee a\right) \wedge \left( q\vee a\right) $.
From \cite[Theorem 3]{Ca2020-2}, it follows that the function $V_{5}:L\times
L\rightarrow L$ defined by the formula (\ref{e1}) is an idempotent nullnorm
on $L$ with the zero element $a.$
\end{proof}

Considering a bounded lattice $L$, $a\in L\setminus \left\{ 0,1\right\} $
and $I_{a}=\left\{ p,q\right\} $ such that $p\neq q$, $p\nparallel q$ and $%
p\vee a=q\vee a$, it should be pointed out that if $p\leq q$ (resp. $q\leq
p),$ then the nullnorm $V_{5}$ on $L$ given by the formula (\ref{e1})
coincides with the nullnorm $V_{4}$ (resp. $V_{3}$) on $L$ given by the
formula (\ref{eq-2.5}) (resp. (\ref{eq-1})).

The following Corollary \ref{cor2} can be proved in a manner similar to the
proof of Corollary \ref{cor1-1}.

\begin{corollary}
\label{cor2} Let $\left( L,\leq ,0,1\right) $ be a bounded lattice, $a\in
L\backslash \left\{ 0,1\right\} $ and $I_{a}=\left\{ p,q\right\} $ such that
$p\neq q$ and $p\nparallel q$. Then, the binary operation $V_{6}:L\times
L\rightarrow L$ defined by
\begin{equation}
V_{6}\left( x,y\right) =%
\begin{cases}
x\vee y, & (x,y)\in \lbrack 0,a]^{2}, \\
x\wedge y, & (x,y)\in \lbrack a,1]^{2}, \\
a, & (x,y)\in \left( \lbrack 0,a]\times \lbrack a,1]\right) \cup \left(
\lbrack a,1]\times \lbrack 0,a]\right) , \\
x\vee (y\wedge a), & (x,y)\in \lbrack 0,a]\times I_{a}, \\
y\vee (x\wedge a), & (x,y)\in I_{a}\times \lbrack 0,a], \\
x\wedge (y\vee a), & (x,y)\in \lbrack a,1]\times I_{a}, \\
y\wedge (x\vee a), & (x,y)\in I_{a}\times \lbrack a,1], \\
p\wedge q, & (x,y)\in \left\{ (p,q),(q,p),(p,p),(q,q)\right\}
\end{cases}
\label{e2}
\end{equation}%
is an idempotent nullnorm on $L$ with the zero element $a$ if and only if $%
p\wedge a=q\wedge a$.
\end{corollary}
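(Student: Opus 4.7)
The plan is to mirror the proof of Corollary~\ref{cor1-1} step by step, dualising each occurrence of meet/join and swapping the role of $1$ with $0$ in the associativity test. The crucial observation is that $V_6$ differs from $V_5$ only on the block $\{p,q\}^2$, where $p\vee q$ has been replaced by $p\wedge q$; consequently, probing associativity with the bottom element $0$ (instead of the top element $1$) should be what extracts the dual identity $p\wedge a=q\wedge a$.

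For necessity, I would assume $V_6$ is an idempotent nullnorm on $L$ with zero element $a$ and, using $p\nparallel q$, reduce by symmetry to the case $p\leq q$, so that $p\wedge q=p$. Expanding by the definition~(\ref{e2}) yields
\[
V_6(0,V_6(p,q)) = V_6(0,p) = p\wedge a,
\]
while
\[
V_6(V_6(0,p),q) = V_6(p\wedge a, q) = (p\wedge a)\vee(q\wedge a) = q\wedge a,
\]
the last equality being a consequence of $p\leq q$. Associativity of $V_6$ then forces $p\wedge a=q\wedge a$.

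For sufficiency, I would assume $p\wedge a=q\wedge a$, reduce again by symmetry to $p\leq q$, and invoke \cite[Theorem~3]{Ca2020-2} in exactly the same way as in the proof of Corollary~\ref{cor1-1}, but in its meet-form adapted to $V_6$. The hypothesis that must be verified is the dual lattice identity $(p\wedge q)\vee(p\wedge a)\vee(q\wedge a)=p\wedge q$, which follows from a one-line calculation: the left-hand side equals $p\vee(p\wedge a)\vee(p\wedge a)=p=p\wedge q$, using $p\wedge q=p$ and $p\wedge a=q\wedge a$.

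The only delicate point I anticipate is pinning down the precise dual formulation of \cite[Theorem~3]{Ca2020-2} appropriate to the candidate operation $V_6$; once that is in hand, both directions collapse to the short manipulations sketched above, in complete parallel with the proof of Corollary~\ref{cor1-1}. No new lattice-theoretic ingredient is needed.
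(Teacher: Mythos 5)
Your proposal is correct and is exactly the argument the paper intends: the paper omits the proof of Corollary~\ref{cor2}, stating only that it is proved ``in a manner similar to the proof of Corollary~\ref{cor1-1},'' and your dualization (probing associativity with $0$ in place of $1$ to extract $p\wedge a=q\wedge a$, and verifying $(p\wedge q)\vee(p\wedge a)\vee(q\wedge a)=p\wedge q$ to invoke the meet-form construction theorem of \cite{Ca2020-2}) is precisely that dual argument. The only point to settle is the citation: the sufficiency direction should appeal to the dual construction result (Theorem~4 rather than Theorem~3 of \cite{Ca2020-2}), which you have already flagged.
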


Considering a bounded lattice $L$, $a\in L\backslash \left\{0, 1\right\}$,
and $I_{a}=\left\{p, q\right\}$ such that $p\neq q$, $p\nparallel q$ and $%
p\wedge a=q\wedge a$, it should be pointed out that if $p\leq q$ (resp., $%
q\leq p$), then the nullnorm $V_{6}$ on $L$ given by the formula (\ref{e2})
coincides with the nullnorm $V_{3}$ (resp., $V_{4}$) on $L$ given by the
formula \eqref{eq-1} (resp., \eqref{eq-2.5}).

\begin{remark}
\begin{enumerate}[{\rm (1)}]
\item Considering the commutativity of $V_3$ in the proof of Theorem~\ref%
{Wu-Operation}, the cases 2.1.1, 2.2.1, 2.3.1, 3.1.1, 3.1.2, 3.2.1, 3.2.2,
3.3.1, and 3.3.2 can be verified directly. We include them here for
completeness.
\item It should be noted that the last lattice in Fig.~\ref{FIG:1} is
isomorphic to the one depicted by Hasse diagram in \cite[Theorem 2]{CK2018}.
\end{enumerate}
\end{remark}

\begin{proposition}
\label{p1} Let $\left(L, \leq, 0, 1\right) $ be a bounded lattice, $a\in
L\backslash \left\{ 0,1\right\}$, and $I_{a}=\left\{ p,q\right\}$ with $%
p\neq q$. Then, ($p\vee a<q\vee a$ and $q\wedge a\leq p\wedge a$) or ($p\vee
a\leq q\vee a$ and $q\wedge a<p\wedge a$) if and only if there is only one
idempotent nullnorm on $L$ with the zero element $a$, which is given by the
formula \eqref{eq-1}.
\end{proposition}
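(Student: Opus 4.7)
The plan is to leverage Theorem~\ref{Wu-Operation}, which enumerates all possible idempotent nullnorms on $L$ with zero element $a$ as the four families $V_{1},V_{2},V_{3},V_{4}$ corresponding to conditions (i)-(iv). By Lemma~\ref{stru-lemma}, any such idempotent nullnorm $V$ is completely determined by the single value $V(p,q)\in\{(p\wedge a)\vee(q\wedge a),\,(p\vee a)\wedge(q\vee a),\,p,\,q\}$, and these four possibilities match $V_{2},V_{1},V_{3},V_{4}$ respectively. Hence counting distinct idempotent nullnorms reduces to counting which of (i)-(iv) hold, provided the corresponding values of $V(p,q)$ are genuinely distinct.

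The central observation, to be verified first, is that condition (iii) automatically excludes (i) and (ii). Since $p\in I_{a}$ forces $p\wedge a<a<p\vee a$ strictly, the inequalities $q\wedge a\leq p\wedge a$ and $p\vee a\leq q\vee a$ of (iii) give $(p\wedge a)\vee(q\wedge a)=p\wedge a<a$ and $(p\vee a)\wedge(q\vee a)=p\vee a>a$, contradicting the equalities required by (i) and (ii). So under (iii) the only rival of $V_{3}$ is $V_{4}$, and since $V_{3}(p,q)=p\neq q=V_{4}(p,q)$, these are always distinct when both exist.

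For sufficiency, the plan is to show that the proposition's condition is equivalent to ``(iii) holds and (iv) fails.'' The condition entails (iii) directly, and in each of its two subcases a strict inequality rules out the opposite inequality demanded by (iv): $p\vee a<q\vee a$ contradicts the ``$q\vee a\leq p\vee a$'' of (iv), and $q\wedge a<p\wedge a$ contradicts its ``$p\wedge a\leq q\wedge a$.'' Combined with the central observation, (iii) becomes the unique case of Theorem~\ref{Wu-Operation} that holds, so $V_{3}$ is the only idempotent nullnorm on $L$ with zero element $a$.

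For necessity, assume $V_{3}$ is the unique idempotent nullnorm. Existence of $V_{3}$ forces (iii). If the proposition's condition failed, then combined with (iii) one would obtain $p\vee a=q\vee a$ and $q\wedge a=p\wedge a$, which is exactly (iv); Theorem~\ref{Wu-Operation} would then produce $V_{4}$ as a second, distinct idempotent nullnorm, contradicting uniqueness. Hence at least one inequality in (iii) must be strict, which is the proposition's condition. The only mildly delicate step is verifying the equivalence between the stated disjunction and ``(iii) and not (iv)''; everything else is routine bookkeeping modulo Theorem~\ref{Wu-Operation} and Lemma~\ref{stru-lemma}.
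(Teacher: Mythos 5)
Your overall strategy is reasonable, and your necessity direction (uniqueness of $V_3$ implies the stated disjunction) essentially coincides with the paper's: derive condition (iii) of Theorem~\ref{Wu-Operation} from the existence of $V_3$, observe that the failure of the disjunction together with (iii) forces $p\vee a=q\vee a$ and $p\wedge a=q\wedge a$, hence (iv), and then exhibit $V_4$ as a second idempotent nullnorm. That bookkeeping, including your reduction of the disjunction to ``(iii) holds and (iv) fails,'' is correct.

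The gap is in the sufficiency direction, at the step ``(iii) becomes the unique case of Theorem~\ref{Wu-Operation} that holds, so $V_3$ is the only idempotent nullnorm.'' Theorem~\ref{Wu-Operation} asserts only that \emph{existence} of some idempotent nullnorm is equivalent to the disjunction of (i)--(iv); it does not assert that $V_k$ is an idempotent nullnorm \emph{if and only if} condition $(k)$ holds, so the failure of (i), (ii) and (iv) does not, from the theorem's statement alone, exclude $V_1$, $V_2$ and $V_4$. You do have the raw material to close this. Your ``central observation'' computes $(p\wedge a)\vee(q\wedge a)=p\wedge a<a$ and $(p\vee a)\wedge(q\vee a)=p\vee a>a$, which is exactly the hypothesis of Lemma~\ref{Ia-Operation}; it is that lemma (not the mere failure of (i) and (ii)) which forces $V(p,q)\in I_a$ for every idempotent nullnorm $V$ and thereby excludes $V_1$ and $V_2$. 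To exclude $V_4$, i.e.\ the possibility $V(p,q)=q$, you must additionally prove that any idempotent nullnorm with $V(p,q)=q$ satisfies (iv) --- this is the associativity computation comparing $V(0,V(p,q))=q\wedge a$ with $V(V(0,p),q)=(p\wedge a)\vee(q\wedge a)$ and its dual, which appears inside case (2) of the theorem's necessity proof but is not part of its statement --- or, as the paper does more directly, use monotonicity: $q=V(p,q)\leq V(p,1)=p\vee a$ yields $q\vee a\leq p\vee a$, contradicting $p\vee a<q\vee a$ in the first subcase, and dually $p\wedge a=V(p,0)\leq V(p,q)=q$ contradicts $q\wedge a<p\wedge a$ in the second. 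With either supplement your argument is complete; as written, the decisive uniqueness step rests on a reading of Theorem~\ref{Wu-Operation} that it does not support.
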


\begin{proof}
{\it Necessity.} Let $p\vee a<q\vee a$ and $q\wedge a\leq p\wedge a$.
Then, for an idempotent nullnorm $V$ on $L$, it follows from Lemma~\ref{Ia-Operation}
that $V(p,q) \in \left\{p,q\right\}$. Suppose that $V(p,q)=q$. By the monotonicity of
$V$, for $q<1$, it holds $q=V(p, q) \leq V(p, 1) =p\vee a$. This
implies that $q\vee a\leq p\vee a$, which is a contradiction, and thus
$V(p, q)=p$. Therefore, by using Lemma~\ref{stru-lemma}, it follows that
there is only one idempotent nullnorm on $L$ with the zero element $a$, which is
given by the formula \eqref{eq-1}. Otherwise, if $p\vee a\leq q\vee a$ and
$q\wedge a<p\wedge a$, it can be verified similarly.

\medskip

{\it Sufficiency.}
Let $V$ be the only idempotent nullnorm on $L$ given by the formula \eqref{eq-1}.
In this case, $V(p, q)=p$. Suppose, on the contrary,
that ($p\vee a\nless q\vee a$ or $q\wedge a\nleq p\wedge a$) and
($p\vee a\nleq q\vee a$ or $q\wedge a\nless p\wedge a$).
From $V(p, q)=p$, it follows that $V(0, V(p, q))=V(0, p)=p\wedge a$ and
$V(V(0, p), q)=V(p\wedge a, q)=(p\wedge a)\vee (q\wedge a)$. This, together with
the associativity of $V$, implies that $q\wedge a\leq p\wedge a$. Furthermore,
$V(1, V(p,q))=V(1, p)=p\vee a$ and $V(V(1, p), q) =V( p\vee a, q)=(p\vee a) \wedge
(q\vee a)$. This, together with the associativity of $V$, implies that
$p\vee a\leq q\vee a$. From the supposition that ($p\vee a\nless q\vee a$ or $q\wedge a\nleq p\wedge a$)
and ($p\vee a\nleq q\vee a$ or $q\wedge a\nless p\wedge a$), it can be verified that
$p\vee a=q\vee a$ and $q\wedge a=p\wedge a$. This, together with the proof of Theorem~\ref{Wu-Operation},
implies that the binary operation $V_{4}:L\times L\rightarrow L$
defined by the formula \eqref{eq-2.5} is an idempotent nullnorm on $L$ with the zero
element $a$. This contradicts with the fact that $V$ is the only idempotent nullnorm on $L$.
\end{proof}

The following Proposition \ref{p2} can be proved in a manner similar to the
proof of Proposition \ref{p1}.

\begin{proposition}
\label{p2} Let $\left(L,\leq, 0, 1\right)$ be a bounded lattice, $a\in
L\backslash \left\{0,1\right\}$, and $I_{a}=\left\{p,q\right\}$ with $p\neq q
$. Then, ($p\wedge a<q\wedge a$ and $q\vee a\leq p\vee a$) or ($p\wedge
a\leq q\wedge a$ and $q\vee a<p\vee a$) if and only if there is only one
idempotent nullnorm on $L$ with the zero element $a$, which is given by the
formula \eqref{eq-2.5}.
\end{proposition}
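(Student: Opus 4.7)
The plan is to mirror almost verbatim the proof of Proposition \ref{p1}, since Proposition \ref{p2} is the order-dual companion obtained by swapping the roles of $0$/$\vee$ with $1$/$\wedge$ and interchanging \eqref{eq-1} with \eqref{eq-2.5}; the same structural symmetry is already exploited when passing from Theorem \ref{Wu-Operation}(iii) to (iv), and from Corollary \ref{cor1-1} to Corollary \ref{cor2}.

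For the necessity direction, I would treat the two alternatives in the hypothesis separately but by parallel arguments. Assume first that $p\wedge a<q\wedge a$ and $q\vee a\leq p\vee a$, and let $V$ be any idempotent nullnorm on $L$ with zero element $a$. The strict inequality gives $(p\wedge a)\vee(q\wedge a)=q\wedge a<a$, while $q\vee a\leq p\vee a$ gives $(p\vee a)\wedge(q\vee a)=q\vee a>a$, so Lemma \ref{Ia-Operation} forces $V(p,q)\in I_a=\{p,q\}$. I would then rule out $V(p,q)=p$ via the associativity identity $V(0,V(p,q))=V(V(0,p),q)$: the left side is $p\wedge a$ and the right side equals $V(p\wedge a,q)=(p\wedge a)\vee(q\wedge a)=q\wedge a$, contradicting $p\wedge a<q\wedge a$. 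Hence $V(p,q)=q$, and the remaining clauses of Lemma \ref{stru-lemma} force $V$ to coincide with the nullnorm $V_{4}$ defined by \eqref{eq-2.5}. The second alternative, $p\wedge a\leq q\wedge a$ and $q\vee a<p\vee a$, is handled by the same scheme using $V(1,\cdot)$ in place of $V(0,\cdot)$: one obtains $p\vee a = (p\vee a)\wedge(q\vee a)=q\vee a$, contradicting the new strict inequality, and again concludes $V(p,q)=q$.

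For sufficiency, I would assume that the map $V_{4}$ given by \eqref{eq-2.5} is the only idempotent nullnorm on $L$, so in particular $V_{4}(p,q)=q$. The associativity computations on the triples $(0,p,q)$ and $(1,p,q)$ then yield $q\wedge a=(p\wedge a)\vee(q\wedge a)$ and $q\vee a=(p\vee a)\wedge(q\vee a)$, i.e.\ $p\wedge a\leq q\wedge a$ and $q\vee a\leq p\vee a$. Arguing by contradiction, if neither disjunct in the conclusion held, the only way to reconcile this with the two non-strict inequalities just derived would be $p\wedge a=q\wedge a$ and $p\vee a=q\vee a$; but then clauses (iii) and (iv) of Theorem \ref{Wu-Operation} are both satisfied, and the binary operation $V_{3}$ defined by \eqref{eq-1} is a second idempotent nullnorm on $L$ with zero element $a$, contradicting the assumed uniqueness.

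The main obstacle I anticipate is the bookkeeping of the Boolean structure hidden in the negation of the disjunction in the sufficiency part: one must carefully match each strict/non-strict inequality coming out of the associativity identities with the correct disjunct, and ensure that their simultaneous failure really collapses to the equality case which admits the competing nullnorm $V_{3}$. Once this logical layout is written out, every remaining step is a direct invocation of Lemma \ref{stru-lemma}, Lemma \ref{Ia-Operation}, or Theorem \ref{Wu-Operation}, so no nontrivial computation beyond what already appeared in the proof of Proposition \ref{p1} is required.
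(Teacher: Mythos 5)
Your proof is correct and follows essentially the route the paper intends: Proposition~\ref{p2} is stated there without an explicit proof, as the order-dual of Proposition~\ref{p1}, and your argument is precisely that dualization (necessity via Lemma~\ref{Ia-Operation} plus ruling out the wrong value of $V(p,q)$, uniqueness via Lemma~\ref{stru-lemma}, sufficiency by collapsing the failed disjunction to $p\wedge a=q\wedge a$ and $p\vee a=q\vee a$ and exhibiting $V_3$ as a second nullnorm). The only harmless deviation is that you rule out $V(p,q)=p$ by associativity on the triples $(0,p,q)$ and $(1,p,q)$, whereas the paper's Proposition~\ref{p1} uses monotonicity ($q=V(p,q)\leq V(p,1)=p\vee a$); both are valid.
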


\begin{proposition}
\label{p3} Let $\left( L,\leq ,0,1\right) $ be a bounded lattice, $a\in
L\backslash \left\{ 0,1\right\} $ and $I_{a}=\left\{ p,q\right\} $ such that
$p\neq q,$ $p\vee a\leq q\vee a$ and $q\wedge a\leq p\wedge a$. In that
case, the function $V_{4}:L\times L\rightarrow L$ given by the formula %
\eqref{eq-2.5} is an idempotent nullnorm on $L$ with the zero element $a$ if
and only if $p\vee a=q\vee a$ and $q\wedge a=p\wedge a.$
\end{proposition}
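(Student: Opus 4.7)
\textit{Overall strategy.} The statement is a biconditional; the easy direction is sufficiency (reduce to Theorem \ref{Wu-Operation}), and the main content is necessity, which will be a short calculation with associativity entirely in the style of the necessity proof of Proposition \ref{p1}.

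\textit{Necessity.} Assume the function $V_{4}$ defined by \eqref{eq-2.5} is an idempotent nullnorm on $L$ with zero element $a$. The key observation is that $V_{4}(p,q)=V_{4}(q,p)=q$ by the last clause of \eqref{eq-2.5}. I will apply associativity to the triples $(0,p,q)$ and $(1,p,q)$. Using the formula on $(0,q)\in[0,a]\times I_{a}$ gives $V_{4}(0,q)=q\wedge a$, and using it on $(p\wedge a,q)\in[0,a]\times I_{a}$ gives $V_{4}(p\wedge a,q)=(p\wedge a)\vee(q\wedge a)$; hence
\[
q\wedge a = V_{4}(0,V_{4}(p,q)) = V_{4}(V_{4}(0,p),q) = (p\wedge a)\vee(q\wedge a),
\]
so $p\wedge a\leq q\wedge a$. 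Combining this with the standing hypothesis $q\wedge a\leq p\wedge a$ yields $p\wedge a=q\wedge a$. Symmetrically, applying the formula to $(1,q)$ and to $(p\vee a,q)$ gives
\[
q\vee a = V_{4}(1,V_{4}(p,q)) = V_{4}(V_{4}(1,p),q) = (p\vee a)\wedge(q\vee a),
\]
so $q\vee a\leq p\vee a$, and the hypothesis $p\vee a\leq q\vee a$ forces $p\vee a=q\vee a$.

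\textit{Sufficiency.} Suppose $p\wedge a=q\wedge a$ and $p\vee a=q\vee a$. Then in particular $p\wedge a\leq q\wedge a$ and $q\vee a\leq p\vee a$, which is exactly condition (iv) of Theorem \ref{Wu-Operation}. That theorem produces the idempotent nullnorm described in \eqref{eq-2.5}, i.e. precisely the function $V_{4}$, completing the proof.

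\textit{Anticipated difficulty.} There is essentially no hard step: the argument is a routine associativity computation together with a direct appeal to the already-established Theorem \ref{Wu-Operation}. The only point requiring a bit of care is to read the correct clause of the piecewise definition \eqref{eq-2.5} at each step of the associativity chain (e.g.\ noting that $p\wedge a\in[0,a]$ and $p\vee a\in[a,1]$, so the relevant cases are the $[0,a]\times I_{a}$ and $[a,1]\times I_{a}$ cases rather than the $I_{a}\times I_{a}$ case).
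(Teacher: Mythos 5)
Your proof is correct and takes essentially the same route as the paper's: associativity applied to the triples $(0,p,q)$ and $(1,p,q)$ with $V_{4}(p,q)=q$ for necessity, and an appeal to the construction in the sufficiency proof of Theorem~\ref{Wu-Operation} (case (iv)) for sufficiency. The only cosmetic difference is that you extract the inequalities $p\wedge a\leq q\wedge a$ and $q\vee a\leq p\vee a$ from the associativity identities and then combine them with the standing hypotheses, whereas the paper simplifies $(p\vee a)\wedge(q\vee a)$ to $p\vee a$ directly using the hypothesis; the calculations are identical in substance.
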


\begin{proof}
{\it Necessity.}
Suppose that the binary operation $V_{4}:L\times L\rightarrow L$ given by the formula \eqref{eq-2.5}
is an idempotent nullnorm on $L$ with the zero element $a$. Then $V_{4}(p,q)=q$. This implies that
$V_{4}(1, V_{4}(p, q))=V_{4}(1, q)=q\vee a$ and $V_{4}(V_{4}(1, p), q)=V_{4}(p\vee a, q)
=( p\vee a)\wedge ( q\vee a) =p\vee a$, and thus $p\vee a=q\vee a$ by the associativity of $V_{4}$.
Similarly, by applying $V_{4}(0, V_{4}(p, q))=V_{4}(V_{4}(0, p), q)$, we have $q\wedge a=p\wedge a$.

\medskip

{\it Sufficiency.}
Let $p\vee a=q\vee a$ and $q\wedge a=p\wedge a$. According to the proof of Theorem~\ref{Wu-Operation},
it is clear that the binary operation $V_{4}:L\times L\rightarrow L$ given by the
formula \eqref{eq-2.5} is an idempotent nullnorm on $L$ with the zero element $a$.
\end{proof}

The following Proposition \ref{p4} can be proved in a manner similar to the
proof of Proposition \ref{p3}.

\begin{proposition}
\label{p4} Let $(L, \leq, 0, 1)$ be a bounded lattice, $a\in L\backslash
\left\{0, 1\right\}$ and $I_{a}=\left\{p, q\right\}$ such that $p\neq q$, $%
p\wedge a\leq q\wedge a$ and $q\vee a\leq p\vee a$. In that case, the binary
operation $V_{3}:L\times L\rightarrow L$ given by the formula \eqref{eq-1}
is an idempotent nullnorm on $L$ with the zero element $a$ if and only if $%
p\vee a=q\vee a$ and $q\wedge a=p\wedge a$.
\end{proposition}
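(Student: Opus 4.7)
The plan is to mimic the proof of Proposition \ref{p3}, exchanging the roles of $V_{3}$ and $V_{4}$ and of the two incomparable elements. The hypothesis $p\wedge a\leq q\wedge a$ and $q\vee a\leq p\vee a$ is precisely case (iv) of Theorem \ref{Wu-Operation}, while $V_{3}$ is the binary operation naturally associated to case (iii); the content of the proposition is therefore that $V_{3}$ can still satisfy associativity only when the two orderings between $p\wedge a,q\wedge a$ and $p\vee a,q\vee a$ collapse to equalities.

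For the necessity direction, I would assume that $V_{3}$ given by \eqref{eq-1} is an idempotent nullnorm on $L$ with zero element $a$. From the last clause of \eqref{eq-1} one reads off $V_{3}(p,q)=p$. I then evaluate both sides of the associativity identity $V_{3}(1,V_{3}(p,q))=V_{3}(V_{3}(1,p),q)$. Using the $[a,1]\times I_{a}$ clause of \eqref{eq-1}, the left-hand side equals $V_{3}(1,p)=p\vee a$; the right-hand side equals $V_{3}(p\vee a,q)=(p\vee a)\wedge(q\vee a)$, since $p\vee a\in [a,1]$. Applying the hypothesis $q\vee a\leq p\vee a$, the right-hand side simplifies to $q\vee a$, so associativity forces $p\vee a=q\vee a$. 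The symmetric calculation with $0$ in place of $1$, using the $[0,a]\times I_{a}$ clause of \eqref{eq-1} and the hypothesis $p\wedge a\leq q\wedge a$, yields $p\wedge a=q\wedge a$.

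For the sufficiency direction, once $p\vee a=q\vee a$ and $p\wedge a=q\wedge a$ hold, condition (iii) of Theorem \ref{Wu-Operation} is satisfied (one has both $p\vee a\leq q\vee a$ and $q\wedge a\leq p\wedge a$), so the sufficiency part of that theorem already shows that $V_{3}$ defined by \eqref{eq-1} is an idempotent nullnorm on $L$ with zero element $a$. Nothing further needs to be verified.

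The main obstacle, as in Proposition \ref{p3}, is really bookkeeping: one must make sure that the auxiliary pairs $(p\vee a,q)$ and $(p\wedge a,q)$ lie in the correct region of $L^{2}$ so that the right clauses of \eqref{eq-1} are invoked. Since $p\vee a\in [a,1]$ and $p\wedge a\in [0,a]$, only the $[a,1]\times I_{a}$ and $[0,a]\times I_{a}$ clauses of \eqref{eq-1} are required, and no new ideas beyond those used in Proposition \ref{p3} are needed.
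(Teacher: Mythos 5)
Your proof is correct and is precisely the argument the paper intends: the paper states that Proposition \ref{p4} is proved in the same manner as Proposition \ref{p3}, and your necessity computation via $V_{3}(1,V_{3}(p,q))=V_{3}(V_{3}(1,p),q)$ and its counterpart with $0$, together with the appeal to case (iii) of Theorem \ref{Wu-Operation} for sufficiency, is exactly that analogue with the roles of $V_{3}$ and $V_{4}$ (and of the order hypotheses) exchanged. No gaps.
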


\begin{corollary}
\label{cor1} Let $(L, \leq, 0, 1)$ be a bounded lattice, $a\in L\backslash
\left\{0, 1\right\}$ and $I_{a}=\left\{p, q\right\}$ with $p\neq q$. In that
case, the binary operations both $V_{3}:L\times L\rightarrow L$ and $%
V_{4}:L\times L\rightarrow L$ given by the formulas \eqref{eq-1} and %
\eqref{eq-2.5}, respectively, are idempotent nullnorms on $L$ with the zero
element $a$ if and only if $p\vee a=q\vee a$ and $q\wedge a=p\wedge a$.
\end{corollary}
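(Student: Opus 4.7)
The strategy is to extract the two equalities from associativity applied to judiciously chosen triples, and to recover both constructions from the equalities by invoking Propositions \ref{p3} and \ref{p4}. Roughly, the necessity direction forces the equalities one inequality at a time by pairing $p,q$ with the extremal elements $0$ and $1$, while sufficiency is essentially immediate from the earlier propositions.

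For necessity, assume that both $V_{3}$ and $V_{4}$ are idempotent nullnorms on $L$ with zero element $a$. From \eqref{eq-1} we have $V_{3}(p,q)=p$, while from \eqref{eq-2.5} we have $V_{4}(p,q)=q$. Evaluating the associativity identity $V_{3}(0,V_{3}(p,q))=V_{3}(V_{3}(0,p),q)$ branch by branch using the piecewise definition gives $p\wedge a=(p\wedge a)\vee(q\wedge a)$, hence $q\wedge a\leq p\wedge a$. Repeating the same trick with $1$ in place of $0$ yields $p\vee a\leq q\vee a$. The parallel computation for $V_{4}$, using $V_{4}(p,q)=q$, produces $p\wedge a\leq q\wedge a$ and $q\vee a\leq p\vee a$. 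Combining the four inequalities forces the two equalities $p\vee a=q\vee a$ and $p\wedge a=q\wedge a$. This is essentially the same bookkeeping that already appears in the necessity part of Proposition~\ref{p1}.

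For sufficiency, suppose $p\vee a=q\vee a$ and $p\wedge a=q\wedge a$. Then the inequality hypothesis of Proposition~\ref{p3}, namely $p\vee a\leq q\vee a$ and $q\wedge a\leq p\wedge a$, holds trivially, and its sufficiency direction ensures that $V_{4}$ is an idempotent nullnorm on $L$ with zero element $a$. Symmetrically, the inequality hypothesis of Proposition~\ref{p4} is satisfied, so $V_{3}$ is an idempotent nullnorm on $L$ with zero element $a$.

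The main obstacle is not conceptual but notational: in the necessity direction one must carefully identify the correct branch of the piecewise definitions of $V_{3}$ and $V_{4}$, since the intermediate values $V_{3}(0,p)=p\wedge a$, $V_{3}(1,p)=p\vee a$ and their $V_{4}$-analogues lie in $[0,a]$ or $[a,1]$ rather than in $I_{a}$, so the subsequent applications to $q$ invoke the $[0,a]\times I_{a}$ and $[a,1]\times I_{a}$ cases. Once these branches are laid out, the four inequalities follow by pure symbol pushing and the equivalence is established.
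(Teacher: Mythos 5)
Your proof is correct and follows the route the paper clearly intends (it states this corollary without an explicit proof, as an immediate consequence of the surrounding results): necessity by the same associativity computations with $0$ and $1$ that appear in the necessity parts of Propositions~\ref{p1}--\ref{p4}, and sufficiency by invoking the sufficiency directions of Propositions~\ref{p3} and~\ref{p4} (equivalently, cases (iii) and (iv) in the proof of Theorem~\ref{Wu-Operation}), both of which apply since the two equalities make their inequality hypotheses trivial. The branch identifications you flag ($V_{3}(0,p)=p\wedge a\in[0,a]$, $V_{3}(1,p)=p\vee a\in[a,1]$, and the $V_{4}$ analogues) are exactly right, so the four inequalities and hence the two equalities follow as claimed.
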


From Lemma~\ref{stru-lemma} and Corollary \ref{cor1}, we can easily observe
that on a bounded lattice $L$ with an element $a\in L\backslash \left\{0,
1\right\}$ satisfying that $I_{a}=\left\{p, q\right\}$, $p\neq q$, $p\vee
a=q\vee a$ and $q\wedge a=p\wedge a$, there exist only two idempotent
nullnorms $V_{3}:L\times L\rightarrow L$ and $V_{4}:L\times L\rightarrow L$
defined by the formulas \eqref{eq-1} and \eqref{eq-2.5}, respectively.

\begin{proposition}\label{Pro-special}
Let $(L, \leq, 0, 1)$ be a bounded lattice, $a\in L\backslash
\left\{0, 1\right\}$, and $p, q\in I_{a}$. If $p \nparallel q$, $p\wedge a=q\wedge a$ and $p\vee a=q\vee a$, then
$(p\wedge a)\vee (q\wedge a)\vee (p\wedge q)=p\wedge q$ and $(p\vee a)\wedge (q\vee a)\wedge (p\vee q)=p\vee q$.
\end{proposition}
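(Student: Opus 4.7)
The plan is to exploit the symmetry of the statement in $p$ and $q$ together with the comparability hypothesis $p \nparallel q$. Since both conclusions and all hypotheses are symmetric in $p$ and $q$, I can assume without loss of generality that $p \leq q$, which immediately gives $p \wedge q = p$ and $p \vee q = q$.

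With this reduction, the first identity collapses dramatically. Using $p \wedge a = q \wedge a$ and $p \wedge q = p$, the left-hand side $(p\wedge a)\vee(q\wedge a)\vee(p\wedge q)$ becomes $(p \wedge a) \vee p$. Since $p \wedge a \leq p$ holds in any lattice, this simplifies to $p$, which equals $p \wedge q$ as required. So the first identity reduces to the tautology $p \wedge a \leq p$ once the symmetry and equality hypotheses are applied.

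For the second identity, using $p \vee a = q \vee a$ and $p \vee q = q$, the expression $(p\vee a)\wedge(q\vee a)\wedge(p\vee q)$ reduces to $(p \vee a) \wedge q$. The key observation here is that $q \leq q \vee a = p \vee a$, so $q \leq p \vee a$ and hence $(p \vee a) \wedge q = q = p \vee q$. This is again essentially an absorption fact once $p \vee a = q \vee a$ is invoked.

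There is no real obstacle: the statement is a straightforward lattice-theoretic consequence of the two equalities together with comparability, and the only thing worth writing carefully is the WLOG reduction and the explicit verification of the two chains $p \wedge a \leq p$ and $q \leq p \vee a$. I would present the proof in a few lines rather than as a case analysis over both $p \leq q$ and $q \leq p$, citing symmetry.
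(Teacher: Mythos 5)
Your proof is correct and follows essentially the same route as the paper's: both assume $p\leq q$ without loss of generality (justified by the symmetry of the hypotheses and conclusions) and then reduce each identity to an absorption-law computation. The only cosmetic difference is that you simplify the second expression via $(p\vee a)\wedge q$ while the paper uses $(q\vee a)\wedge q$; these coincide under the hypothesis $p\vee a=q\vee a$.
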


\begin{proof}
Without loss of generality, assume that $p\leq q$ as $p\nparallel q$. This, together with
$p\wedge a=q\wedge a$ and $p\vee a=q\vee a$, implies that
$$
(p\wedge a)\vee (q\wedge a)\vee (p\wedge q)=(p\wedge a)\vee p=p=p\wedge q,
$$
and
$$
(p\vee a)\wedge (q\vee a)\wedge (p\vee q)=(q\vee a)\wedge q=q=p\vee q.
$$
\end{proof}

\begin{remark}
Proposition~\ref{Pro-special} shows that \cite[Theorem~2]{WZL2020} is a special case
of \cite[Theorems 3 and 4]{Ca2020-2}.
\end{remark}

\section{Conclusion}

Following the characterization of nullnorms on unit interval $\left[ 0,1%
\right] $, the characterization of nullnorms related to algebraic structures
on bounded lattices has recently attracted much attention. The concept of
nullnorms on bounded lattices was introduced by Kara\c{c}al et al. \cite%
{KIM2015}. In the meanwhile, they showed the existence of nullnorms on
bounded lattices with a zero element and constructed the greatest and the
smallest of them. After their demonstration, the constructions of nullnorms
on bounded lattices have become the focus of the study of many researchers.
Nevertheless the constructions of nullnorms on bounded lattices were firstly
provided by Kara\c{c}al et al. \cite{KIM2015}, these constructions do not generate an
idempotent nullnorm, in general. \c{C}ayl\i\ and Kara\c{c}al \cite{CK2018}
studied the idempotent nullnorms on bounded lattices and proved that an idempotent
nullnorm on a bounded lattice need not always exists. They also introduced a
construction method for idempotent nullnorms on bounded lattices,
where there is just one element incomparable with the zero element.
Afterward, \c{C}ayl\i\ \cite{Ca2020} proposed new construction approaches
for idempotent nullnorms on bounded lattices with some additional
assumptions. Notice that these methods encompass as a special case the one
introduced in \cite{CK2018}. Then, she \cite{Ca2020-2,Ca2020-3} introduced
some related constructions of nullnorms on bounded lattices with the zero
element $a$ based on the existence of triangular norms on $\left[ a,1\right]
^{2}$ and triangular conorms $\left[ 0,a\right] ^{2}$. Sun and Liu~\cite%
{SL2020} characterized a family of nullnorms on bounded lattices and
represented them by triangular norms and triangular conorms. In this paper, we continued to
study on idempotent nullnorms on bounded lattices with the zero element
different from the bottom and top elements from the mathematical point of
view. We presented the characterization of idempotent nullnorms on bounded
lattices, where there are just two distinct elements incomparable with the
zero element $a$. We also obtained some basic properties for the bounded
lattice containing two distinct elements incomparable with some element $a$.
We believe that the results presented in this paper can help to
understand the structure of the idempotent nullnorms on bounded lattices and
provide guidelines for a future work on their characterization.


\end{document}